\documentclass[a4paper,11pt]{article}
\usepackage{latexsym}
\usepackage{amssymb}
\usepackage{enumerate}
\usepackage{xcolor}
\usepackage{commath}
\usepackage{array}
\usepackage{amsfonts}
\usepackage{amsmath,amsthm}
\usepackage{hyperref}
\usepackage{algorithm}
\usepackage{algorithmic}
\usepackage{framed}
\usepackage{boites}
\usepackage{graphicx}
\usepackage{cases}
\usepackage{mathtools}

\textheight=21.5cm \textwidth=15cm
\topmargin=-0.8cm
\oddsidemargin=0.3cm \evensidemargin=0.3cm

\newenvironment{@abssec}[1]{%
\if@twocolumn

\section*{#1}%
\else

\vspace{.05in}\footnotesize
\parindent .2in
{\upshape\bfseries #1. }\ignorespaces
\fi}

{\if@twocolumn\else\par\vspace{.1in}\fi}

\newenvironment{keywords}{\begin{@abssec}{\keywordsname}}{\end{@abssec}}

\newenvironment{AMS}{\begin{@abssec}{\AMSname}}{\end{@abssec}}

\newcommand\keywordsname{Key words}
\newcommand\AMSname{AMS subject classifications}
\newcommand\AMname{AMS subject classification}
\newcommand\restr[2]{{% we make the whole thing an ordinary symbol
\left.\kern-\nulldelimiterspace % automatically resize the bar with \right
#1 % the function
\vphantom{|} % pretend it's a little taller at normal size
\right|_{#2} % this is the delimiter
}}
\newtheorem{theorem}{Theorem}[section]
\newtheorem{lemma}[theorem]{Lemma}
\newtheorem{corollary}[theorem]{Corollary}
\newtheorem{proposition}[theorem]{Proposition}
\newtheorem{remark}[theorem]{Remark}
\newtheorem{definition}[theorem]{Definition}
\newtheorem{problem}{Problem}

\newtheorem{mainthm}{Theorem}

\newtheorem{thm}{Theorem}

\newtheorem{lem}[thm]{Lemma}

\newcommand{\NN}{\mathbb{N}}

\newcommand{\RR}{\mathbb{R}}

\def\XXint#1#2#3{{\setbox0=\hbox{$#1{#2#3}{\int}$}
\vcenter{\hbox{$#2#3$}}\kern-.5\wd0}}

\newcommand{\link}{\mathop{\circ\kern-.35em -}}

\newcommand{\ol}{\overline}
\newcommand{\pa}{\partial}

\newcommand{\dv}{\mathop{\mathrm{div}}}

\newcommand{\gr}{\nabla}

\newcommand{\al}{\alpha}
\newcommand{\be}{\beta}
\newcommand{\ga}{\gamma}
\newcommand{\Ga}{\Gamma}

\newcommand{\De}{\Delta}

\newcommand{\la}{\lambda}
\newcommand{\La}{\Lambda}

\newcommand{\te}{\theta}

\newcommand{\om}{\omega}
\newcommand{\Om}{\Omega}
\newcommand{\rn}{{\mathbb{R}}^N}
\newcommand{\sg}{\sigma}

\newcommand\setbld[2]{\left\{ #1 \; :\; #2\right\}}

\newcommand{\tin}{{\text{in }}}
\newcommand{\ton}{{\text{on }}}

\newcommand{\tfor}{{\text{for }}}
\newcommand{\id}{{\rm Id}}
\newcommand\pp[1]{\left( #1\right)}

\newcommand\qq[1]{\left[ #1\right]}
\newcommand{\inv}{^{-1}}
\newcommand{\cdottone}{{\boldsymbol{\cdot}}}

\newcommand{\dato}{\restr{\frac{d}{dt}}{t=0}}

\newcommand{\C}{\mathcal{C}}
\newcommand{\cC}{\mathcal{C}}

\newcommand{\cF}{\mathcal{F}}
\newcommand{\cG}{{\mathcal G}}
\newcommand{\cH}{{\mathcal H}}

\newcommand{\cJ}{{\mathcal J}}

\newcommand{\cO}{{\mathcal O}}

\newcommand{\cX}{\mathcal{X}}
\newcommand{\cY}{\mathcal{Y}}

\title{How to construct parametrized families of free boundaries near nondegenerate solutions
\thanks{This research was partially supported by the
Grant-in-Aid for Research Activity Start-up (No. 20K22298) of the Japan Society for the Promotion of Science.}
}

\author{Lorenzo Cavallina %\thanks{Research Center for Pure and Applied Mathematics,
%Graduate School of Information Sciences, Tohoku
%University, Sendai, 980-8579, Japan ({\tt cava@ims.is.tohoku.ac.jp}).}%\footnotemark[2]
}
\date{}

\begin{document}

\maketitle

\begin{abstract}
In this paper, we introduce the notion of \emph{variational} free boundary problem. Namely, we say that a free boundary problem is variational if its solutions can be characterized as the critical points of some shape functional. Moreover, we extend the notion of nondegeneracy of a critical point to this setting.
As a result, we provide a unified functional-analytical framework that allows us to construct families of solutions to variational free boundary problems whenever the shape functional is nondegenerate at some given solution.

As a clarifying example, we apply this machinery to construct families of nontrivial solutions to the two-phase Serrin's overdetermined problem in both the degenerate and nondegenerate case.
\end{abstract}

\begin{keywords}
overdetermined problem, free boundary problem, nondegenerate critical points, shape derivatives, implicit function theorem, two-phase, Serrin's overdetermined problem.
\end{keywords}

\begin{AMS}
35N25, 35J15, 35Q93, 35B32
\end{AMS}

\pagestyle{plain}
\thispagestyle{plain}

\section{Introduction}
\subsection{The general framework}
In this paper, we consider a class of free-boundary problems that we will call \emph{variational}. We will refer to a free boundary problem as \emph{variational} if its solutions are characterized as being the critical shapes of some shape functional. Among many, two famous examples are: for some constant $c>0$ find a bounded domain $\Om\subset\rn$ such that
\begin{enumerate}[(1)]
\item %Find a bounded domain $\Om\subset\rn$ such that
its mean curvature $H$ is constantly equal to $c$ on $\pa\Om$ \cite{Ale1958}.
\item %Find a bounded domain $\Om\subset\rn$ such that
the solution $u_\Om$ of the boundary value problem
\begin{equation*}
-\De u_\Om =1 \quad \tin\Om, \quad u_\Om=0 \quad \ton \pa\Om
\end{equation*}
satisfies $|\gr u_\Om|\equiv c$ on $\pa\Om$ \cite{Se1971}.
\end{enumerate}
It is easy to check that the solutions of the two overdetermined problems $(1)$ and $(2)$ coincide with the critical shapes of the functionals
\begin{equation*}
J_1(\Om)=|\pa\Om|-c|\Om| \quad \text{and} \quad J_2(\Om)=\int_\Om |\gr u_\Om|^2 - c^2|\Om| \end{equation*}
respectively (thus, in particular, both $(1)$ and $(2)$ are \emph{variational} free boundary problems).
Well-known results \cite{Ale1958, Se1971} state that, under suitable regularity assumptions, the solutions to both problems turn out to be balls.
Of course, one can also consider free boundary problems where only a portion of the boundary is free (to be determined) or ones that depend on multiple parameters. This usually gives rise to a whole family of nontrivial (non-radially symmetric) solutions. The idea of overdetermined problems that admit a family of nontrivial solutions is not new. Indeed, such problems have been studied since a long time ago. Although it would be impossible to give an exhaustive list of the known results in the field, we refer the interested reader to \cite{beurling, flucher rumpf, altcaff, acker, HS97, BHS2014, DvEPs,CY1, CYisaac, kamburov sciaraffia, henrot onodera, gilsbach onodera, gilsbach stollenwerk} and the references therein.

In this paper, we propose a systematic way to study the local behavior of the (parametrized) families of solutions of variational overdetermined problems.

Let $\Om\subset\rn$ ($N\ge2$) be a sufficiently smooth open set with compact boundary $\pa \Om$ and let $U$ be a bounded open neighborhood of $\pa\Om$ with sufficiently smooth boundary. Moreover, let $\Theta$ and $\Theta_{\rm reg}$ be two Banach spaces of $\rn$ valued functions that satisfy:
\begin{equation*}
C_0^\infty(\ol U, \rn)\subset \Theta_{\rm reg}\subset C_0^1(\ol U,\rn)\subset \Theta\subset W_0^{1,\infty}(U,\rn).
\end{equation*}
Throughout this paper, when there is no chance of confusion, the same symbol will be used to denote both a function defined on some subset of $\rn$ and its extension by zero to the whole space.
Now, for $\theta\in \Theta$ let $\Om_\theta:=(\id+\theta)(\Om)$, where $\id:\rn\to\rn$ is the identity mapping and set \begin{equation*}
\cO:=\setbld{\Om_\theta}{\theta\in\Theta, \quad \norm{\theta}_{W^{1,\infty}(U,\rn)}<1}.
\end{equation*}

Let now $m\in\NN$, $\La$ be a Banach space of ``parameters" and assume that for all $\om\in\cO$ there exists a mapping $g_\om:\La\to L^2(\pa\om,\RR^m)$.
A general parametrized free boundary problem in a neighborhood of a solution $\Om$ can then be formalized as follows:
\begin{problem}\label{pb 1}
For given $\la\in\La$, find $\om\in\cO$ such that
\begin{equation*}
g_\om(\la)=0 \quad \ton \pa\om.
\end{equation*}
\end{problem}
In what follows, we will assume that for all $\om\in\cO$ there exists an open neighborhood $\Theta'$ of $0\in\Theta$ such that the set $\om_\te:=(\id+\te)(\om)$ is well defined. Moreover, we will also assume that Problem \ref{pb 1} is \emph{variational}, that is, there exists a parametrized shape functional $J:\cO\times \La\to \RR^m$ such that the map
\begin{equation}\label{J_om def}
\begin{aligned}
\cJ_\om:\quad & \Theta'\times \La\to \RR^m, \\
\ & (\theta,\la)\mapsto J(\om_\theta,\la)
\end{aligned}
\end{equation}
is Fr\'echet differentiable and its partial Fr\'echet derivative with respect to the first variable is given by
\begin{equation}\label{first derivative}
\pa_\theta \cJ_\om(0,\la)[\theta_0]=\int_{\pa\om}g_\om(\la)\ \theta_0\cdot n_\om \quad \text{for all }\theta_0 \in\Theta,
\end{equation}
where $n_\om$ denotes the outward unit normal of $\pa\om$ (later, in Remark \ref{first derivatives satisfy the structure hypothesis}, we explain how restrictive the structure formula \eqref{first derivative} actually is).

%Furthermore, for all $\om\in\cO$, let $X_\om$ denote the Banach space
%\begin{equation*}
% X_\om := \setbld{\xi}{\exists \theta\in \Theta \text{ s.t. }\xi n_\om= \restr{\theta}{\pa\om}}
%\end{equation*}
%endowed with the norm
%\begin{equation*}
%\norm{\xi}_{X_\om}:=\inf \setbld{\norm{\theta}_\Theta}{\xi n_\om= \restr{\theta}{\pa\om}}.
%\end{equation*}
%Furthermore, we suppose that for all $\om\in \Theta$ there exits a bounded linear (in each variable) extension operator $E_\om: X_\om\times \La\to \Theta$ such that
%\begin{equation*}
% \restr{\left(E_\om( \xi,\la)\right)}{\pa\om}= \xi n_\om.
%\end{equation*}
%In a similar way we define the Banach space $X_\om^{\rm reg}$, its norm, and the bounded linear extension operator from $X_\om^{\rm reg}$ into $\Theta_{\rm reg}$ simply by replacing every occurrence of $\Theta$ in the above by $\Theta_{\rm reg}$.
Moreover, let $X$ denote a Banach space of real valued functions on $\pa\Om$ and assume that there exists a Fr\'echet differentiable ``extension operator" $E:X\times \La\to \Theta_{\rm reg}$ such that $E(\cdottone,0):X\to\Theta_{\rm reg}$ is a bounded linear operator that satisfies
\begin{equation}\label{restr E}
\restr{\left(E( \xi,0)\right)}{\pa\Om}= \xi n.
\end{equation}
Now, let $Y\subset L^2(\pa\Om,\RR^m)$ be a Banach space and suppose that for some open neighborhood $\Theta'_{\rm reg}$ of $0\in \Theta_{\rm reg}$, the mapping
$%\begin{equation*}
% \begin{aligned}
%\Theta'_{\rm reg}\times \La \longrightarrow Y,\\
(\theta,\la)\mapsto g_{\Om_\theta}(\la)\circ\pp{\id+\theta}\in Y
% \end{aligned}
%\end{equation*}
$
is Fr\'echet differentiable in a neighborhood of $(0,0)\in \Theta'_{\rm reg}\times \La'$. For $(\xi,\la)$ small, let $\Om_{\xi,\la}$ denote the set $\Om_{E_\Om(\xi,\la)}$.
By composition, there exists a neighborhood $X'$ of $0\in X$ such that the mapping
\begin{equation}\label{j}
\begin{aligned}
j:\quad & X'\times \La' \to \RR^m,\\
\ & (\xi,\la) \longmapsto J(\Om_{\xi,\la},\la)
\end{aligned}
\end{equation}
is twice Fr\'echet differentiable. Moreover, there exists a bounded linear operator $Q: X \to Y$ such that
\begin{equation}\label{Q}
\pa_{xx}^2\ j(0,0)[\xi,\eta]= \int_{\pa\Om} Q(\xi) \eta \quad \text{for all }\xi,\eta\in X.
\end{equation}
(The proof of existence and the actual construction of $Q$ are dealt with in section $2$.)
Now, employing the structure formulas \eqref{first derivative} and \eqref{Q}, we say that $\Om$ is a \emph{nondegenerate} critical shape for $J$ at $\la=0$ if the following two conditions hold:
\begin{enumerate}[(i)]
\item $g_\Om(0)=0$ on $\pa\Om$ (\emph{criticality});
\item the mapping $Q$ is a bijection between $X$ and Y (\emph{nondegeneracy}).
\end{enumerate}

After a long preparation, we are now ready to state a result that links the nondegeneracy of a critical shape $\Om$ with the existence of a parametrized family of solutions to Problem \ref{pb 1} in a neighborhood of $\Om$.
\begin{mainthm}\label{thm I}
Let the notation be as above. Suppose that $\Om$ is a nondegenerate critical shape for the shape functional $J$ at $\la=0$. Then, there exists open neighborhoods $X''$, $\La''$ of $0\in X$ and $0\in \La$ respectively and a continuous map $\La''\ni \la \mapsto \xi(\la)\in X''$ such that the set $\Om_{\xi(\la),\la}$ is a solution to Problem \ref{pb 1}. Moreover, for $(\xi,\la)\in X''\times \La''$, the set $\Om_{\xi,\la}$ is a solution to Problem \ref{pb 1} if and only if $\xi=\xi(\la)$.
\end{mainthm}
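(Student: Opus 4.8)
The plan is to recast Problem~\ref{pb 1} as a single operator equation on the fixed boundary $\pa\Om$ and to solve it by the implicit function theorem. Since $\id+E(\xi,\la)$ is a diffeomorphism for $(\xi,\la)$ in a neighborhood of $(0,0)$, the requirement that $\Om_{\xi,\la}$ solve Problem~\ref{pb 1}, namely $g_{\Om_{\xi,\la}}(\la)=0$ on $\pa\Om_{\xi,\la}$, is equivalent to the vanishing of its pullback
\begin{equation*}
F(\xi,\la):=g_{\Om_{\xi,\la}}(\la)\circ\pp{\id+E(\xi,\la)}\in Y\qquad\text{on }\pa\Om.
\end{equation*}
By the assumed Fr\'echet differentiability of $(\theta,\la)\mapsto g_{\Om_\theta}(\la)\circ(\id+\theta)$ near $(0,0)$ and of $E$, with $E(0,0)=0$, the map $F:X'\times\La'\to Y$ is Fr\'echet differentiable near $(0,0)$. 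The criticality hypothesis~(i), $g_\Om(0)=0$, together with $\Om_{0,0}=\Om$, gives $F(0,0)=0$.

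The crux is to show that the partial derivative $\pa_\xi F(0,0):X\to Y$ is a bounded linear isomorphism, and I claim that in fact $\pa_\xi F(0,0)=Q$. To see this, I would first compute the first variation of $\xi\mapsto j(\xi,0)=J(\Om_{\xi,0},0)$ by combining the chain rule with the structure formula~\eqref{first derivative}. Writing $\Phi_\xi:=\id+E(\xi,0)$ and using that $E(\cdottone,0)$ is linear, the Eulerian velocity produced by an increment $\eta\in X$ is $E(\eta,0)\circ\Phi_\xi\inv$, whence, after pulling the boundary integral back to $\pa\Om$,
\begin{equation*}
\pa_\xi j(\xi,0)[\eta]=\int_{\pa\Om}F(\xi,0)\,\big(E(\eta,0)\cdot(n_{\Om_{\xi,0}}\circ\Phi_\xi)\big)\,J_\tau(\Phi_\xi),
\end{equation*}
where $J_\tau(\Phi_\xi)$ denotes the tangential Jacobian. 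Differentiating once more at $\xi=0$ in a direction $\zeta\in X$, the key simplification is that every term in which $F(\xi,0)$ is \emph{not} itself differentiated, as well as the variations of $n_{\Om_{\xi,0}}\circ\Phi_\xi$ and of $J_\tau(\Phi_\xi)$, is multiplied by $F(0,0)=g_\Om(0)=0$ and hence drops out. Using $\Phi_0=\id$, $n_{\Om_{0,0}}=n$, $J_\tau(\Phi_0)=1$ and $E(\eta,0)\cdot n=\eta$ (from~\eqref{restr E}), what remains is
\begin{equation*}
\int_{\pa\Om}\pa_\xi F(0,0)[\zeta]\,\eta=\pa_{xx}^2 j(0,0)[\zeta,\eta]\qquad\text{for all }\zeta,\eta\in X.
\end{equation*}
Comparing with the defining relation~\eqref{Q} of $Q$ yields $\int_{\pa\Om}\big(\pa_\xi F(0,0)[\zeta]-Q(\zeta)\big)\eta=0$ for every $\eta\in X$; since $X$ is dense in $L^2(\pa\Om)$ this forces $\pa_\xi F(0,0)[\zeta]=Q(\zeta)$ in $Y$, i.e.\ $\pa_\xi F(0,0)=Q$.

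With this identification, nondegeneracy~(ii) says precisely that $\pa_\xi F(0,0)=Q$ is a continuous linear bijection between the Banach spaces $X$ and $Y$, hence a topological isomorphism by the bounded inverse theorem. The implicit function theorem, applied to $F$ at $(0,0)$, then produces neighborhoods $X''$ of $0\in X$ and $\La''$ of $0\in\La$ and a continuous map $\La''\ni\la\mapsto\xi(\la)\in X''$ with $\xi(0)=0$ and $F(\xi(\la),\la)=0$, and moreover such that, for $(\xi,\la)\in X''\times\La''$, one has $F(\xi,\la)=0$ if and only if $\xi=\xi(\la)$. Unravelling the equivalence of the first paragraph, $\Om_{\xi(\la),\la}$ solves Problem~\ref{pb 1} for every $\la\in\La''$ (reducing to the given critical shape $\Om$ at $\la=0$), and within $X''\times\La''$ these are the only solutions, which is exactly the assertion of the theorem.

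I expect the main obstacle to be the second-order computation underlying the identification $\pa_\xi F(0,0)=Q$: one must carry out the double shape differentiation carefully, keep track of the tangential-Jacobian and normal-variation contributions, and verify rigorously that all of them except the linearization of $F$ itself are annihilated by the criticality $g_\Om(0)=0$. This is the only place where the variational structure~\eqref{first derivative} is genuinely exploited, and it is also where the abstract construction of $Q$ carried out in Section~2 must be reconciled with the concrete linearized operator $\pa_\xi F(0,0)$.
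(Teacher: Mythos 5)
Your proposal is correct and follows essentially the same route as the paper: both recast the problem as the vanishing of the pullback $h(\xi,\la)=g_{\Om_{\xi,\la}}(\la)\circ\pp{\id+E(\xi,\la)}$ on the fixed boundary, identify its linearization $\pa_\xi h(0,0)$ with $Q$ by differentiating $\pa_\xi j(\xi,0)[\eta]=\int_{\pa\Om}h(\xi,0)\,m(\xi,0)\cdot n\,\eta$ once more and using $h(0,0)=g_\Om(0)=0$ to annihilate the normal-vector and tangential-Jacobian variation terms, and then conclude via the implicit function theorem applied to $h$. The only cosmetic difference is that the paper takes $Q:=\pa_x h(0,0)$ as the very construction of the operator in \eqref{Q}, whereas you recover the identification a posteriori through density of $X$ in $L^2(\pa\Om)$.
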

\begin{remark}
The definition of \emph{nondegeneracy} of a critical point used in Theorem \ref{thm I} can be thought of as a generalization of that used by Smale, Palais and Tromba in \cite{smale, Palais 69, tromba}. Indeed, by considering two (possibly distinct) Banach spaces $X$ and $Y$, we can take into account the ``derivative loss" that usually occurs when dealing with shape derivatives.
\end{remark}
\begin{remark}
If equation
\begin{equation*}
Q(\xi)=\eta\qquad \text{for } (\xi,\eta)\in X\times Y
\end{equation*}
satisfies the Fredholm alternative (as it is often the case if one appropriately chooses the spaces $X$ and $Y$), then the nondegeneracy assumption $(ii)$ can be simply rewritten as $\ker Q=\{0\}$.
\end{remark}
\begin{remark}
We purposely chose a very general setting concerning the action of the parameters $\la\in\La$. As a matter of fact, in some applications it might be useful to consider the case where the functional $J$ depends on the parameter $\la$ through some auxiliary shape (for instance, the parameter $\la$ could be used to ``encode" the perturbation of some set, as will be done for Problem \ref{pb 2} in the following subsection). Similarly, we chose to consider the general case where the extension operator $E$ also depends on the parameter $\la$.
\end{remark}

\subsection{Applications to the two-phase Serrin's overdetermined problem}
In what follows, we will consider an example problem where the machinery of Theorem~\ref{thm I} can be applied to construct parametrized families of nontrivial solutions.

Given a bounded open set $D\subset\rn$ and a positive constant $\sg_c\ne 1$, let $\sg$ denote the following piece-wise constant function
\begin{equation}\label{sigma}
\sg=\sg_c\ \cX_D+\cX_{\rn\setminus D},
\end{equation}
where $\cX_A$ is the characteristic function of the set $A$ (i.e., $\cX_A(x)=1$ if $x\in A$ and $\cX_A(x)=0$ otherwise) and consider the following overdetermined problem.
\begin{problem}\label{pb 2}
Let $D\subset\rn$ be a bounded open set. Find a domain $\Om\supset \ol D$ such that the solution $u$ to the boundary value problem
\begin{equation}\label{2ph eq}
\begin{cases}
-\dv\left(\sg\gr u\right)=1 \quad \text{in }\Om, \\
u=0\quad \text{on }\pa\Om,\\
\end{cases}
\end{equation}
also solves the overdetermined condition
\begin{equation} \label{2ph odc}
|\gr u| \equiv c \quad \text{on }\pa\Om
\end{equation}
for some positive constant $c\in\RR$.
\end{problem}
Let $(D,\Om)$ be a pair of bounded open sets satisfying $\ol D\subset \Om$. Moreover, let $\pa\Om$ be at least of class $C^{3,\al}$ and let the pair $(D,\Om)$ be a solution of Problem \ref{pb 2}. Set
\begin{equation}\label{some definitions}
\begin{aligned}
X:=C^{2,\al}(\pa\Om), \quad Y:=C^{1,\al}(\pa\Om), \quad \Phi:=W^{1,\infty}_0(V,\rn), \\
\Theta_{\rm reg}:=\setbld{\te\in C^{2,\al}(\rn,\rn)}{\restr{\te}{\rn\setminus U}\equiv 0}.
\end{aligned}
\end{equation}
Here $V$ is an open set such that $\ol D\subset V\subset \ol V \subset \Om$, while $U$ is a bounded open neighborhood of $\pa\Om$ with sufficiently smooth boundary. Also, let $E: X\to \Theta_{\rm reg}$ be a bounded linear extension operator such that,
\begin{equation}\label{extension E}
\restr{E(\xi)}{\pa\Om}= \xi n,\quad \text{for all }\xi\in X,
\end{equation}
where $n$ denotes the outward unit normal to $\pa\Om$.
Let
\begin{equation}\label{Om_xi D_phi}
\Om_\xi := (\id + E(\xi))(\Om), \quad D_\varphi := (\id + \varphi)(D).
\end{equation}
Notice that, for $(\xi,\varphi)\in X\times\Phi$ sufficiently small, the sets $\Om_\xi$ and $D_\varphi$ are well defined and the inclusion $\ol{D_\varphi}\subset \Om_\xi$ holds.
Let now
\begin{equation*}
\La:= \Phi\times C^{1,\al}(\Om\cup \ol U)\times \RR.
\end{equation*}
\begin{definition}[Solution of Problem \ref{pb 2} with respect to $\la\in\La$]
For any $\la=(\varphi,f, s)\in\La$ and $\xi\in X$ small enough, we say that $(D_\varphi,\Om_\xi)$ is a solution of Problem \ref{pb 1} with respect to the parameters $\la=(\varphi,f, s)$ if the solution of the boundary value problem
\begin{equation}\label{2ph eq pert}
\begin{cases}
-\dv\left( \sg\gr u\right)=1 \quad \text{in }\Om_\xi, \\
u=0\quad \text{on }\pa\Om_\xi,\\
\end{cases}
\end{equation}
where
\begin{equation}\label{sigma pert}
\sg= (\sg_c+s)\cX_{D_\varphi} + \cX_{\rn\setminus D_\varphi}
\end{equation}
also satisfies the overdetermined condition
\begin{equation*}
|\gr u(x)| = c+f(x) \quad \text{for }x\in\pa\Om_{\xi}.
\end{equation*}
\end{definition}
\begin{figure}[h]
\centering
\includegraphics[width=\linewidth]{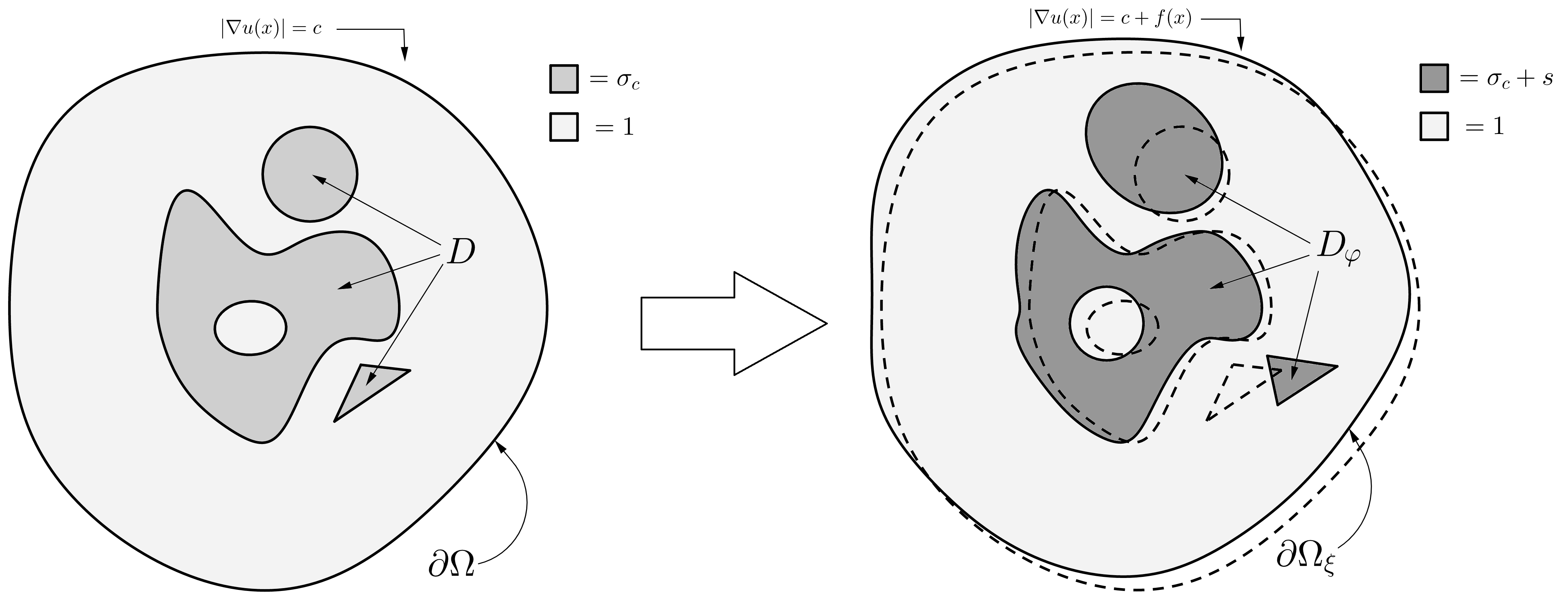}
\caption{Solution of the perturbed problem with respect to $\la=(\varphi,f,s)$}.
\label{perturbed problem}
\end{figure}

By applying Theorem \ref{thm I} to Problem \ref{pb 2} we obtain the following result.
\begin{mainthm} \label{thm II}
Let $(D,\Om)$ be a nondegenerate solution of Problem \ref{pb 2} (in the sense of Definition \ref{def non deg}) and let the notation be as above.
Then there exist neighborhoods $\La'$ of $0\in\La$ and $X'$ of $0\in X$, and a map $\widetilde\xi\in C^\infty(\La',X')$ such that the following hold:
\begin{enumerate}[(1)]
\item For all $\la=(\varphi, f, s)\in\La'$, the pair $(D_\varphi, \Om_{\widetilde\xi(\la)})$ is a solution of
Problem \eqref{pb 1} with respect to the parameters $\la=(\varphi, f, s)$.
\item If $(D_\varphi, \Om_\xi)$ is a solution of Problem \ref{pb 1} with respect to the parameters $\la=(\varphi, f, s)$ for some $(\xi,\la)\in X'\times \La'$, then $\xi=\widetilde \xi(\la)$.
%\item $(\widetilde \xi)'(\la)=-[Q'(p) ]^{-1}\circ \pa_\la Q(p)$, where $p=(\widetilde\xi(\la),\la)$ and $\la\in\La'$.
\end{enumerate}
\end{mainthm}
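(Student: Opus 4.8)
The strategy is to recognize Problem \ref{pb 2} as a concrete realization of the abstract Problem \ref{pb 1} and to deduce Theorem \ref{thm II} from Theorem \ref{thm I}, the only genuinely new point being the upgrade from the continuity of the branch $\la\mapsto\xi(\la)$ furnished by Theorem \ref{thm I} to the $C^\infty$-regularity claimed here. I would begin by exhibiting the variational structure. Modelled on the single-phase functional $J_2$ of the introduction, the natural parametrized shape functional is the two-phase torsional energy
\begin{equation*}
J(\om,\la)=\int_\om\sg\,|\gr u_\om|^2\,dx-\int_\om(c+f)^2\,dx,\qquad \la=(\varphi,f,s),
\end{equation*}
where $u_\om$ is the solution of the transmission problem \eqref{2ph eq pert} with $\sg$ given by \eqref{sigma pert}. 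A Hadamard-type shape calculus for this transmission problem—using that $u_\om$ vanishes on $\pa\om$ and that $\sg\equiv1$ in a neighborhood of $\pa\om$—shows that the partial shape derivative is purely a boundary integral,
\begin{equation*}
\pa_\theta\cJ_\om(0,\la)[\theta_0]=\int_{\pa\om}\big(|\gr u_\om|^2-(c+f)^2\big)\,\theta_0\cdot n_\om,
\end{equation*}
which is exactly the structure formula \eqref{first derivative} with $g_\om(\la)=|\gr u_\om|^2-(c+f)^2$ and $m=1$. In particular the criticality condition $g_\Om(0)=0$ coincides with the overdetermined condition \eqref{2ph odc}, so a pair $(D,\Om)$ solves Problem \ref{pb 2} if and only if it is a critical shape of $J$.

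The heart of the proof—and the step I expect to be the main obstacle—is to establish the smooth dependence of $u_\om$ on the perturbation parameters. Concretely, I would show that the solution $u=u(\xi,\la)$ of \eqref{2ph eq pert}--\eqref{sigma pert} on the domain $\Om_\xi$, with interface $\pa D_\varphi$ and jump $\sg_c+s$, depends in a $C^\infty$ manner on $(\xi,\la)\in X\times\La$ in a neighborhood of the origin. The genuinely two-phase difficulty is the jump of $\sg$ across the moving interior interface $\pa D_\varphi$; to handle it I would pull the whole configuration back to the fixed reference pair $(D,\Om)$ through the diffeomorphisms $\id+E(\xi)$ and $\id+\varphi$, thereby transforming \eqref{2ph eq pert} into a $(\xi,\la)$-family of transmission problems on a fixed configuration whose (matrix-valued, discontinuous) coefficients depend smoothly on the parameters, and then apply the implicit function theorem together with Schauder estimates for elliptic transmission problems. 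The choice $X=C^{2,\al}(\pa\Om)$ and $Y=C^{1,\al}(\pa\Om)$ in \eqref{some definitions} is dictated precisely by the resulting loss of one derivative: a $C^{2,\al}$ normal displacement of $\pa\Om$ yields a gradient $\gr u$ which is only $C^{1,\al}$ up to the boundary, whence $g_{\Om_\xi}(\la)\in C^{1,\al}(\pa\Om)$.

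Granting this smooth dependence, the maps \eqref{J_om def} and \eqref{j} are well defined with the regularity required by Theorem \ref{thm I}; moreover the pulled-back free-boundary operator
\begin{equation*}
F(\xi,\la):=g_{\Om_\xi}(\la)\circ(\id+E(\xi))\in Y
\end{equation*}
is of class $C^\infty$ near $(0,0)$, satisfies $F(0,0)=g_\Om(0)=0$ by criticality, and (after noting that the surface Jacobian of $\id+E(\xi)$ equals $1$ at $\xi=0$) has $\pa_\xi F(0,0)=Q$ by the second-order structure formula \eqref{Q}. By Definition \ref{def non deg}, the nondegeneracy of $(D,\Om)$ is exactly the assertion that $Q$ is a bijection between $X$ and $Y$; being a bounded linear bijection of Banach spaces, it is a topological isomorphism by the open mapping theorem.

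It then only remains to apply the implicit function theorem to $F$ at $(0,0)$. Since $F$ is $C^\infty$ and $\pa_\xi F(0,0)=Q$ is invertible, there exist neighborhoods $\La'$ of $0\in\La$ and $X'$ of $0\in X$ and a unique map $\widetilde\xi\in C^\infty(\La',X')$ with $\widetilde\xi(0)=0$ such that $F(\widetilde\xi(\la),\la)=0$, and such that $F(\xi,\la)=0$ with $(\xi,\la)\in X'\times\La'$ forces $\xi=\widetilde\xi(\la)$. Unwinding the definition of $F$, the identity $F(\widetilde\xi(\la),\la)=0$ is equivalent to $g_{\Om_{\widetilde\xi(\la)}}(\la)=0$ on $\pa\Om_{\widetilde\xi(\la)}$, that is, to $(D_\varphi,\Om_{\widetilde\xi(\la)})$ solving Problem \ref{pb 1} with respect to $\la=(\varphi,f,s)$; this yields conclusion (1), while the local uniqueness clause of the implicit function theorem yields conclusion (2). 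The full $C^\infty$-regularity of $\widetilde\xi$—stronger than the continuity in Theorem \ref{thm I}—is a direct consequence of the $C^\infty$-regularity of $F$ established in the smooth-dependence step.
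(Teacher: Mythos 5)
Your overall architecture matches the paper's: the two-phase torsional energy $J(\om,\la)=\int_\om\sg|\gr u|^2-\int_\om(c+f)^2$ as the parametrized functional, Hadamard's formula giving $g_\om(\la)=|\gr u_\la|^2-(c+f)^2$, smooth dependence of the pulled-back state $v_{\te,\la}$ in $H_0^1(\Om)\cap C^{2,\al}(\ol\Om\cap\ol U)$ via the implicit function theorem and Schauder estimates, the identification $\pa_\xi h(0,0)=Q=-2c\,\Ga$ with $\Ga(\xi)=\pa_n u'[\xi]+\pa_{nn}^2u\,\xi$, and a final application of the implicit function theorem to the pulled-back operator. All of that is faithful to the paper.

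However, there is a genuine gap at the decisive step. You assert that ``by Definition \ref{def non deg}, the nondegeneracy of $(D,\Om)$ is exactly the assertion that $Q$ is a bijection between $X$ and $Y$,'' and then invoke the open mapping theorem. But Definition \ref{def non deg} only asserts $\ker\Ga=\{0\}$, i.e.\ \emph{injectivity}; and in infinite dimensions an injective bounded operator need not be surjective, while the open mapping theorem can only upgrade an already-established bijection to an isomorphism --- it cannot supply surjectivity. Bridging this gap is the entire analytic content of the paper's Section 3.3, which your proposal omits: one must prove that $\ker\Ga=\{0\}$ forces $\Ga:C^{2,\al}(\pa\Om)\to C^{1,\al}(\pa\Om)$ to be onto. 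The paper does this in two steps. First, for $\mu>\norm{\pa_{nn}^2 u}_{L^\infty(\pa\Om)}$ the shifted operator $\Ga+\mu\iota$ is shown to be a bijection by solving the associated boundary value problem $-\dv(\sg\gr w)=0$ in $\Om$ with Robin condition $\pa_n w-\frac{1}{\pa_n u}(\mu+\pa_{nn}^2u)w=\eta$ via Lax--Milgram (Hopf's lemma and the lower bound on $\mu$ make the bilinear form coercive) followed by a Schauder bootstrap to get $w\in C^{2,\al}$ near $\pa\Om$, whence $\xi:=-\restr{w}{\pa\Om}/\pa_n u\in C^{2,\al}(\pa\Om)$ solves $(\Ga+\mu\iota)\xi=\eta$. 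Second, since $\pa\Om$ is compact the inclusion $\iota:C^{2,\al}(\pa\Om)\hookrightarrow C^{1,\al}(\pa\Om)$ is compact, so $K:=\iota(\Ga+\mu\iota)^{-1}$ is compact and $\id-\mu K$ is Fredholm of index $0$; a short computation shows $\ker(\id-\mu K)=\{0\}$ precisely when $\ker\Ga=\{0\}$, the Fredholm alternative gives invertibility, and a regularity argument ($\xi^\star=K(\eta+\mu\xi^\star)$ lies in the image of $\iota$) shows the solution belongs to $C^{2,\al}(\pa\Om)$ and satisfies $\Ga\xi^\star=\eta$. Without this Fredholm argument your application of the implicit function theorem is unjustified, since Theorem \ref{ift} requires the linearization $Q$ to be a bounded \emph{invertible} operator from $X$ onto $Y$, not merely injective.
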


When $\Om$ is a ball and $\sg_c=1$ (one-phase case) $\Om$ is a \emph{degenerate} solution of Problem \ref{pb 2} and thus Theorem \ref{thm II} does not hold. Nevertheless, by restricting the perturbation space, one can still manage to apply the machinery of Theorem \ref{thm I}.

Before stating our result, let us first introduce the notation.
Let $\Om$ be a ball and $D$ be an open set with $\ol D\subset\Om$ and $\sg_c=1$ (that is, the pair $(D,\Om)$ is a solution to Problem \ref{pb 2} for some $c$ depending on the radius of $\Om$). Moreover, let
\begin{equation}\label{notaton 1}
\La:=\Phi\times C^{1,\al}(\Om\cup \ol U)\times \RR\times \cY_1(\pa\Om), \quad
X:= C_{\rm bar}^{2,\al}(\pa\Om),\quad Y:=C_{\rm bar}^{1,\al}(\pa\Om),
\end{equation}
where
\begin{equation}\label{notation 2}
\begin{aligned}
\cY_1(\pa\Om)&:=\setbld{p:\pa\Om\to\RR}{p(x)=a\cdot x\quad \ton \pa\Om, \ \text{for some }a\in\rn},\\
C_{\rm bar}^{k,\al}(\pa\Om)&:=\setbld{\xi\in C^{k,\al}(\pa\Om)}{\langle \xi,p\rangle_{L^2(\pa\Om)}=0 \quad \text{for all }p\in\cY_1}, \quad k=1,2.
\end{aligned}
\end{equation}

\begin{mainthm}\label{thm III}
Let the notation be as above. There exist neighborhoods $\La'$ of $0\in\La$ and $X'$ of $0\in X$, and a map $\widetilde \xi\in C^\infty(\La',X')$ such that the following hold:
\begin{enumerate}[(1)]
\item For all $\la=(\varphi, f, s,\eta)\in\La'$, the pair $\left(D_\varphi, \Om_{\widetilde\xi(\la)+\eta}\right)$ is a solution to the overdetermined problem given by \eqref{2ph eq pert}, \eqref{sigma pert} and overdetermined condition
\begin{equation}\label{odc bar}
\big(|\gr u|^2-(c+f)^2\big)\circ \pp{\id+(\xi+\eta)n)} \in\cY_1(\pa\Om).
\end{equation}
\item If $(D_\varphi, \Om_{\xi+\eta})$ is a solution to the overdetermined problem given by \eqref{2ph eq pert}, \eqref{sigma pert} and overdetermined condition \eqref{odc bar} for some $(\xi,\la)\in X'\times \La'$, then $\xi=\widetilde \xi(\la)$.
%\item $(\widetilde \xi)'(\la)=-[Q'(p) ]^{-1}\circ \pa_\la Q(p)$, where $p=(\widetilde\xi(\la),\la)$ and $\la\in\La'$.
\end{enumerate}
\end{mainthm}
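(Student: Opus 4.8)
The plan is to deduce Theorem~\ref{thm III} from the abstract machinery of Theorem~\ref{thm I}, after isolating the source of the degeneracy. The key point is that, for the one-phase problem ($\sg_c=1$) on a ball, the full $N$-dimensional family of translated balls consists of solutions, so the operator $Q$ of \eqref{Q} cannot be injective; I claim its kernel is exactly $\cY_1(\pa\Om)$. Restricting the unknown to the barycenter space $X=C^{2,\al}_{\rm bar}(\pa\Om)$ and the target to $Y=C^{1,\al}_{\rm bar}(\pa\Om)$ quotients out precisely this kernel, while the additional parameter $\eta\in\cY_1(\pa\Om)$ appearing in $\La$ records the motion along the kernel. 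With these choices $\Om$ becomes a nondegenerate critical shape in the sense of condition $(ii)$ of the general framework, so Theorem~\ref{thm I} becomes applicable.

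First I would compute $Q$ for the one-phase ball. Translating $B_R$ in a direction $a\in\rn$ maps a solution of \eqref{2ph eq} to another solution and preserves the overdetermined condition \eqref{2ph odc}; differentiating this family shows that the corresponding normal speeds $x\mapsto a\cdot n=R^{-1}\,a\cdot x$ lie in $\ker Q$, whence $\cY_1(\pa\Om)\subseteq\ker Q$. For the reverse inclusion I would diagonalize $Q$ in the basis of spherical harmonics. Reducing, as in Section~$2$, the second shape derivative to a Dirichlet--to--Neumann operator on the sphere corrected by the mean curvature, one obtains the eigenvalue of $Q$ on the degree-$k$ harmonics and checks that it vanishes if and only if $k=1$. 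This yields the sharp identity $\ker Q=\cY_1(\pa\Om)$.

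Next I would promote this to a bijectivity statement on the barycenter spaces. Since $Q$ arises as the Hessian $\pa_{xx}^2 j(0,0)$, the bilinear form $(\xi,\eta)\mapsto\int_{\pa\Om}Q(\xi)\eta$ in \eqref{Q} is symmetric, so $Q$ is formally self-adjoint with respect to the $L^2(\pa\Om)$ pairing; being moreover a first-order elliptic pseudodifferential operator on $\pa\Om$, it is Fredholm of index $0$ from $C^{2,\al}(\pa\Om)$ to $C^{1,\al}(\pa\Om)$. Self-adjointness gives $\mathrm{coker}\,Q=\ker Q=\cY_1(\pa\Om)$, so the range of $Q$ is exactly $C^{1,\al}_{\rm bar}(\pa\Om)$; the same identity $\langle Q\xi,p\rangle_{L^2}=\langle\xi,Qp\rangle_{L^2}=0$ for $p\in\cY_1=\ker Q$ shows that $Q$ maps $C^{2,\al}_{\rm bar}(\pa\Om)$ into $C^{1,\al}_{\rm bar}(\pa\Om)$. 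Since $C^{2,\al}_{\rm bar}\cap\ker Q=\{0\}$, the restriction $Q\colon C^{2,\al}_{\rm bar}(\pa\Om)\to C^{1,\al}_{\rm bar}(\pa\Om)$ is injective and surjective, hence a bijection, which is precisely nondegeneracy on the restricted spaces.

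Finally I would set up the defining map as in the proof of Theorem~\ref{thm II}, but now with unknown $\xi\in C^{2,\al}_{\rm bar}(\pa\Om)$, with $\Om$ perturbed by $(\xi+\eta)n$, and with the overdetermined defect projected onto $C^{1,\al}_{\rm bar}(\pa\Om)$ --- this projection is exactly the requirement \eqref{odc bar} that the defect lie in $\cY_1(\pa\Om)$. The defining map depends smoothly on its arguments (elliptic regularity for \eqref{2ph eq pert}--\eqref{sigma pert}, as already established for the two-phase problem in the proof of Theorem~\ref{thm II}), and its partial derivative in $\xi$ at the origin is the bijection $Q$ from the previous step; the implicit function theorem in the $C^\infty$ category therefore produces neighborhoods $\La'$, $X'$ and a map $\widetilde\xi\in C^\infty(\La',X')$ for which $(1)$ and $(2)$ hold. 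I expect the main obstacle to be the spectral computation of the second step, namely showing that $\ker Q$ is \emph{exactly} $\cY_1(\pa\Om)$ and no larger; once this is in hand, the Fredholm and self-adjointness structure upgrades it to the required bijectivity essentially for free.
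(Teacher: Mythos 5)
Your proposal is correct and its overall architecture coincides with the paper's: explicit diagonalization of $Q$ in spherical harmonics (the paper computes $Q(\sum\al_{k,i}Y_{k,i})=-\tfrac{2}{N^2}\sum(k-1)\al_{k,i}Y_{k,i}$, so $\ker Q=\cY_1(\pa\Om)$, exactly your eigenvalue claim), restriction to the barycenter spaces $C^{2,\al}_{\rm bar}$, $C^{1,\al}_{\rm bar}$ with $\eta\in\cY_1(\pa\Om)$ recording the kernel directions, and the implicit function theorem in the $C^\infty$ category applied to the projected defect $h_{\rm bar}=\pi_{\rm bar}\circ g_{\Om_{\xi,\la}}(\la)\circ(\id+(\xi+\eta)n)$, whose derivative is $Q_{\rm bar}=\pi_{\rm bar}\circ Q$. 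The one step where you genuinely diverge is the upgrade from $\ker Q_{\rm bar}=\{0\}$ to bijectivity in the H\"older scale: the paper re-uses its own elementary machinery from the nondegenerate case (the auxiliary operator $\Ga+\mu\iota$ made invertible via Lax--Milgram and a Schauder bootstrap, then compactness of the inclusion $C^{2,\al}(\pa\Om)\hookrightarrow C^{1,\al}(\pa\Om)$ and the Fredholm alternative, i.e.\ the argument of Proposition \ref{characterization} ``along the same lines''), whereas you invoke formal self-adjointness of the Hessian together with Fredholmness of index $0$ for a first-order elliptic pseudodifferential operator between H\"older spaces. Your route is heavier (Fredholm theory for $\Psi$DOs in H\"older norms requires the identification with Zygmund spaces, and index $0$ does not follow from ellipticity alone --- you do get it, but only because you also have self-adjointness in hand), yet it buys two things the paper leaves implicit: the identity $\langle Q\xi,p\rangle_{L^2}=\langle\xi,Qp\rangle_{L^2}=0$ for $p\in\cY_1(\pa\Om)$ cleanly explains both why $Q$ maps $C^{2,\al}_{\rm bar}$ into $C^{1,\al}_{\rm bar}$ (so that $\pi_{\rm bar}$ costs nothing at the linearized level) and why the range is exactly $C^{1,\al}_{\rm bar}(\pa\Om)$; and your translation-invariance argument for $\cY_1(\pa\Om)\subseteq\ker Q$ is a conceptually cleaner complement to the bare spectral computation. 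Both arguments are sound; the paper's is more self-contained, yours identifies the cokernel more directly.
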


This paper is organized as follows. In section 2 we give a proof of Theorem \ref{thm I} utilizing the implicit function theorem for Banach spaces (Theorem \ref{ift} of page \pageref{ift}). In the remaining sections, we show how to apply Theorem \ref{thm I} to Problem \ref{pb 2}. The nondegenerate case is dealt with in section 3 (where we prove Theorem \ref{thm II}), while the degenerate case is dealt with in section 4 (where we prove Theorem \ref{thm III}). Finally, the Appendix covers the technical details of the construction of bounded linear extension operators for $C^{k,\al}$ functions.

\section{Proof of Theorem \ref{thm I}}

\subsection{Preliminaries: the structure theorem for shape derivatives}
\begin{thm}[Structure theorem, \cite{structure}]\label{struct thm}
Let $J:\cO\to \RR^m$ be a shape functional. Consider a fixed domain ${\om}\in\cO$, a smooth open neighborhood $U$ of $\pa\om$ and define the map
\begin{equation*}
\begin{aligned}
\cJ:\quad & \Theta'\to \RR^m, \\
\ & \theta\mapsto J\left( \om_\te\right),
\end{aligned}
\end{equation*}
where $\Theta'$ is a sufficiently small neighborhood of $0\in C^1(\ol U, \rn)$ and $\om_\te:=(\id+\theta)(\om)$. Moreover, let $n$ denote the outward unit normal vector to $\pa\om$ and assume that the functional $\cJ$ is differentiable at $0\in\Theta'$. Then there exists a continuous linear map $\ell_\om:C^1(\pa\om)\to \RR^m$ such that
\begin{equation}
\quad \cJ'(0)[\te]=\ell_\om(\te\cdot n), \quad \text{for all }\te\in C^1(\ol U,\rn).
\end{equation}
\end{thm}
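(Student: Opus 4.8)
The plan is to establish the two assertions that together constitute the classical Hadamard--Zol\'esio structure theorem cited as \cite{structure}: first, that $\cJ'(0)[\te]$ vanishes for every $\te$ tangent to $\pa\om$; and second, that this forces $\cJ'(0)$ to factor through the normal trace $\te\mapsto\te\cdot n$, thereby producing the continuous linear map $\ell_\om$.

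For the first step, let $\te\in C^1(\ol U,\rn)$ be supported in $U$ and satisfy $\te\cdot n=0$ on $\pa\om$. I would consider the flow $\Phi_t$ generated by the autonomous equation $\tfrac{d}{dt}\Phi_t=\te(\Phi_t)$, $\Phi_0=\id$, where $\te$ is extended by zero to $\rn$. Since $\te$ is tangent to $\pa\om$ and vanishes outside $U$, the flow maps $\pa\om$ into itself and fixes every point outside $U$; being a homeomorphism of $\rn$ that preserves $\pa\om$ and is isotopic to the identity, it satisfies $\Phi_t(\om)=\om$ for all small $t$. Consequently $t\mapsto J(\Phi_t(\om))=J(\om)$ is constant. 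On the other hand, standard ODE estimates yield $\Phi_t-\id=t\te+o(t)$ in the $C^1$ topology, so the curves $t\mapsto t\te$ and $t\mapsto\Phi_t-\id$ in $\Theta'$ share the tangent vector $\te$ at $t=0$. Because $\cJ$ is differentiable at $0$, its derivative along either curve equals $\cJ'(0)[\te]$; evaluating along the flow curve gives $\cJ'(0)[\te]=\tfrac{d}{dt}\big|_{t=0}J(\Phi_t(\om))=0$.

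For the second step, fix a $C^1$ extension $\tilde n$ of the unit normal $n$ to $\ol U$ and a bounded linear extension operator $\mathcal{E}:C^1(\pa\om)\to C^1(\ol U)$ with $\restr{\mathcal{E}(v)}{\pa\om}=v$. I would then define $\ell_\om(v):=\cJ'(0)[\mathcal{E}(v)\,\tilde n]$ for $v\in C^1(\pa\om)$. This is well defined independently of the chosen extension: if $\mathcal{E}_1(v),\mathcal{E}_2(v)$ both restrict to $v$, then $(\mathcal{E}_1(v)-\mathcal{E}_2(v))\tilde n$ vanishes on $\pa\om$, hence is tangent there, so by the first step $\cJ'(0)$ annihilates it. Linearity of $\ell_\om$ is inherited from $\cJ'(0)$ and $\mathcal{E}$, and continuity follows from $|\ell_\om(v)|\le\|\cJ'(0)\|\,\|\mathcal{E}(v)\,\tilde n\|_{C^1}\le C\|v\|_{C^1(\pa\om)}$. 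Finally, for an arbitrary $\te$ I decompose $\te=(\te\cdot n)\tilde n+\te_{\mathrm{tan}}$ with $\te_{\mathrm{tan}}:=\te-(\te\cdot n)\tilde n$; since $\te_{\mathrm{tan}}\cdot n=0$ on $\pa\om$, the first step kills the tangential part and linearity gives $\cJ'(0)[\te]=\cJ'(0)[(\te\cdot n)\tilde n]=\ell_\om(\te\cdot n)$, which is the desired formula.

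The main obstacle is the $C^1$ expansion $\Phi_t-\id=t\te+o(t)$ underpinning the first step. The $C^0$ bound is immediate from Gr\"onwall's lemma, but controlling the spatial derivative $D\Phi_t-I$ requires differentiating the flow equation and invoking continuous dependence of the solution of the resulting variational equation on $t$; this is precisely where the $C^1$ regularity of $\te$ (and the regularity of $\pa\om$ ensuring $\tilde n\in C^1$) is consumed. Once this expansion and the invariance $\Phi_t(\om)=\om$ are secured, the remainder of the argument is soft functional analysis.
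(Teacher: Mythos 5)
The paper offers no proof of Theorem \ref{struct thm}: it is quoted as a known result of Novruzi and Pierre \cite{structure}, so there is no internal argument to compare against. Your proposal is, in substance, a correct reconstruction of the standard proof from that reference: (i) show $\cJ'(0)$ annihilates tangential fields by flowing along them, using the invariance $\Phi_t(\om)=\om$ together with the expansion $\Phi_t-\id=t\te+o(t)$ in $C^1(\ol U,\rn)$ and the Fr\'echet differentiability of $\cJ$ at $0$; (ii) define $\ell_\om(v):=\cJ'(0)[\mathcal{E}(v)\,\tilde n]$ and verify well-definedness, linearity and continuity; (iii) split a general $\te$ into normal and tangential parts. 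You also correctly locate where regularity is consumed: the $C^1$ expansion of the flow requires $\te\in C^1$, and the factorization requires $\pa\om$ regular enough that $n$ admits a $C^1$ extension $\tilde n$ and $\te\cdot n\in C^1(\pa\om)$ (i.e.\ $\pa\om$ at least $C^2$, which the paper's blanket ``sufficiently smooth'' hypothesis provides; note this is genuinely needed, as in \cite{structure} the target regularity of $\ell_\om$ degrades for merely $C^1$ boundaries).

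One loose end should be patched. In step (i) you establish $\cJ'(0)[\te]=0$ only for tangential fields \emph{compactly supported} in $U$ (rightly so, since otherwise the zero extension of $\te$ is not $C^1$ and the flow argument breaks), but in step (iii) the tangential part $\te_{\mathrm{tan}}=\te-(\te\cdot n)\tilde n$ of a general $\te\in C^1(\ol U,\rn)$ need not have compact support in $U$. Fix this with a cutoff $\chi\in C_0^\infty(U)$ with $\chi\equiv1$ near $\pa\om$: write $\te_{\mathrm{tan}}=\chi\te_{\mathrm{tan}}+(1-\chi)\te_{\mathrm{tan}}$; the first summand is handled by your flow argument, while the second vanishes on a $\delta$-neighborhood of $\pa\om$, so for $t$ small $(\id+t(1-\chi)\te_{\mathrm{tan}})$ is a homeomorphism fixing that neighborhood pointwise and moving each $x\in\om$ by less than $\dist(x,\pa\om)$, whence $\om_{t(1-\chi)\te_{\mathrm{tan}}}=\om$ and the derivative in that direction vanishes trivially. (The same convention issue is latent in the theorem statement itself, whose $C^1(\ol U,\rn)$ clashes with the paper's standing assumption $\Theta\subset W_0^{1,\infty}(U,\rn)$.) Finally, the claim $\Phi_t(\om)=\om$ deserves one explicit line — invariance of $\ol\om$ and of its complement via Nagumo's viability theorem, then $\Phi_t(\om)$ is a union of components of $\rn\setminus\pa\om$ depending continuously on $t$ — but this is standard. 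With these patches the proof is complete.
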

\begin{remark}\label{first derivatives satisfy the structure hypothesis}
In what follows we will explain why the structural hypothesis \eqref{first derivative} is not as restrictive as it might seem at first sight. Let us assume $C^1(\ol U,\rn)\subset\Theta$. First of all, if the mapping $\cJ_\om:\Theta\to \RR^m$ defined by \eqref{J_om def} is Fr\'echet differentiable at $0\in\Theta$, then, in particular, by Theorem \ref{struct thm}:
\begin{equation*}
\cJ'_\om(0)[\theta]=\ell_\om (\theta\cdot n) \quad \text{for all }\theta\in C^1(\ol U,\rn).
\end{equation*}
Now, for $1\le k\le m$, let $\pi_k:\RR^m\to \RR$ denote the projection onto the $k$\textsuperscript{th} coordinate. Then, $\pi_k\circ \ell_\om:C^1(\pa\om)\to \RR$ is a bounded linear functional. Now, by the Hahn--Banach theorem, $\pi_k\circ \ell_\om$ admits a continuous linear extension $\ga_k:L^2(\pa\om)\to\RR$. In particular, by the Riesz representation theorem there exist functions $g_k:L^2(\pa\om)$ such that
\begin{equation*}
\ga_k(\cdottone)= \langle g_k, \cdottone\ \rangle_{L^2(\pa\om)}\qquad (k=1,2,\dots,m).
\end{equation*}
This implies that, for all $\te\in C^1(\ol U,\rn)$, the Fr\'echet derivative of $\cJ_\om$ at $0$ can be written as
\begin{equation}\label{J' on C^1}
\cJ'_\om(0)[\te]= \ell_\om(\te\cdot n)=
\begin{pmatrix}
\pi_1\circ\ell_\om (\te\cdot n)\\
\vdots \\
\pi_m\circ\ell_\om (\te\cdot n)\\
\end{pmatrix} =
\begin{pmatrix}
\int_{\pa\om} g_1 \ \te\cdot n\\
\vdots \\
\int_{\pa\om} g_m \ \te\cdot n\\
\end{pmatrix}
= \int_{\pa\om} g_\om\ \te\cdot n,
\end{equation}
where $g_\om:\pa\om\to\RR^m$ denotes the vector valued function such that $\pi_k\circ g=g_k$ for all $k=1,2,\dots,m$. Finally, since $C^1(\ol U,\rn)$ is dense in $W_0^{1,\infty}(U,\rn)\supset \Theta$, we get that identity \eqref{J' on C^1} holds for all $\te\in\Theta$ as well, which is what we wanted to show.
\end{remark}

\begin{remark}
As shown in \cite{structure theorem new} by using tools from geometric measure theory, an analogous result holds under the weaker assumption that $\om$ is just a set of finite perimeter.
\end{remark}

We will conclude this subsection by giving a corollary of Theorem \ref{struct thm}, characterizing the structure of the Fr\'echet derivative of a shape functional evaluated at some point other than zero.
\begin{corollary}\label{structure corollary}
Let the notation be as in Theorem \ref{struct thm}. Suppose that $\cJ$ is Fr\'echet differentiable at some small $\varphi\in\Theta'$ and set $\cJ_\varphi(\theta):=\cJ(\varphi+\theta)$. Then, $\cJ_\varphi$ is also Fr\'echet differentiable at $0\in C^1(\ol U,\rn)$ and there exists a continuous linear map $\ell_\varphi:C^1(\pa\om)\to \RR^m$ such that
\begin{equation*}
\cJ'(\varphi)[\te]=\cJ_\varphi'(0)[\te]=\ell_{\om_\varphi}\left( \restr{\te \circ(\id+\varphi)^{-1}}{\pa\om_\varphi} \cdot n_\varphi \right)\quad \text{for all }\theta\in C^1(\ol U,\rn),
\end{equation*}
where $n_\varphi$ denotes the outward unit normal vector to the perturbed set $\pa\om_\varphi$.
\end{corollary}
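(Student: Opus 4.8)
The plan is to reduce the statement to a direct application of the structure theorem (Theorem \ref{struct thm}) to the shape functional $J$ based at the \emph{perturbed} domain $\om_\varphi$, rather than at $\om$. The bridge between the two viewpoints is a reparametrization of the admissible perturbations, after which the conclusion follows from the chain rule.

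First I would record the reparametrization identity. For $\varphi$ small, $\id+\varphi$ is a $C^1$-diffeomorphism of $\rn$ onto itself, so that $\om_\varphi=(\id+\varphi)(\om)\in\cO$ and $\pa\om_\varphi=(\id+\varphi)(\pa\om)$. Fix a smooth open neighborhood $\widetilde U$ of $\pa\om_\varphi$ contained in $(\id+\varphi)(U)$. Given $\te\in C^1(\ol U,\rn)$, set $\psi:=\te\circ(\id+\varphi)^{-1}$. A direct computation using the identity $(\id+\psi)\circ(\id+\varphi)=\id+\varphi+\te$ shows that $(\id+\psi)(\om_\varphi)=\om_{\varphi+\te}$; in other words, perturbing $\om_\varphi$ by $\psi$ produces exactly the same set as perturbing $\om$ by $\varphi+\te$.

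Next I would introduce the composition operator $T\te:=\te\circ(\id+\varphi)^{-1}$. It is linear in $\te$, and since $(\id+\varphi)^{-1}$ is a fixed $C^1$-diffeomorphism, $T$ is a bounded linear isomorphism from $C^1(\ol U,\rn)$ onto $C^1(\ol{\widetilde U},\rn)$ with bounded inverse $\psi\mapsto\psi\circ(\id+\varphi)$. Writing $\widetilde{\cJ}(\psi):=J((\id+\psi)(\om_\varphi))$ for the shape-functional map based at $\om_\varphi$, the reparametrization identity gives $\cJ_\varphi=\widetilde{\cJ}\circ T$, equivalently $\widetilde{\cJ}=\cJ_\varphi\circ T^{-1}$.

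Finally I would assemble the conclusion. Since $\cJ$ is Fr\'echet differentiable at $\varphi$ and $\te\mapsto\varphi+\te$ is affine, $\cJ_\varphi$ is Fr\'echet differentiable at $0$ with $\cJ_\varphi'(0)=\cJ'(\varphi)$; composing with the bounded linear isomorphism $T^{-1}$ shows that $\widetilde{\cJ}$ is Fr\'echet differentiable at $0$. Applying Theorem \ref{struct thm} to $\widetilde{\cJ}$ at $0$ then produces a continuous linear map $\ell_{\om_\varphi}:C^1(\pa\om_\varphi)\to\RR^m$ with $\widetilde{\cJ}'(0)[\psi]=\ell_{\om_\varphi}(\restr{\psi}{\pa\om_\varphi}\cdot n_\varphi)$. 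Differentiating $\cJ_\varphi=\widetilde{\cJ}\circ T$ at $0$ by the chain rule and substituting $T\te=\te\circ(\id+\varphi)^{-1}$ yields the claimed formula. I expect the only genuine obstacle to be the bookkeeping behind the operator $T$: one must verify that $\id+\varphi$ is a true $C^1$-diffeomorphism with $C^1$ inverse (which holds for $\varphi$ small) and that $\widetilde U$ can be chosen smooth, so that the structure theorem is legitimately applicable on $\om_\varphi$; once these points are settled, the remainder is a routine chain-rule computation.
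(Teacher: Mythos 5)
Your proposal is correct and follows essentially the same route as the paper: both rest on the reparametrization identity $(\id+\varphi+t\te)=\bigl(\id+t\,\te\circ(\id+\varphi)^{-1}\bigr)\circ(\id+\varphi)$, which converts perturbations of $\om$ at $\varphi$ into perturbations of $\om_\varphi$ at $0$, followed by an application of Theorem \ref{struct thm} to the set $\om_\varphi$. The paper carries this out as a one-line G\^ateaux computation along $t\mapsto\varphi+t\te$, whereas you package the same idea as conjugation by the composition operator $T$ plus the chain rule; your version has the minor merit of making explicit the Fr\'echet differentiability at $0$ of the functional based at $\om_\varphi$, which the paper leaves implicit.
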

\begin{proof}
Fix $\varphi\in\Theta'$ and $\theta\in C^1(\ol U,\rn)$. By construction, it is clear that $\cJ_\varphi$ is Fr\'echet differentiable at $0\in C^1(\ol U,\rn)$ and $\cJ'(\varphi)[\te]=\cJ_\varphi'(0)[\te]$.
In what follows, we will compute the Fr\'echet derivative $\cJ_\varphi'(0)[\te]$ as a G\^ateaux derivative in order to simplify the computations. We have
\begin{equation*}
\begin{aligned}
\cJ_\varphi'(0)[\te]&= \dato \cJ_\varphi(t\te)=\dato \cJ(\varphi+t\te) = \dato J\pp{(\id+ \varphi+t\te)\ \om} \\
&= \dato J\pp{(\id+\varphi+t\te)\circ(\id+\varphi)\inv \om_\varphi}
%= \dato J \pp{ \pp{(\id+\varphi)\circ (\id+\varphi)\inv + t\te\circ(\id+\varphi)\inv } \om_\varphi}\\
=\dato J\pp{\pp{ \id+t\te\circ(\id+\varphi)\inv} \om_\varphi} \\
&= \ell_{\om_\varphi}\pp{\te\circ(\id + \varphi)\inv \cdot n_\varphi},
\end{aligned}
\end{equation*}
where in the fourth equality we employed the fact that the map $\id+\varphi$ is a bijection from $\om$ to $\om_\varphi$ (for $\varphi$ small enough) and in the last equality we made use of Theorem \ref{struct thm} applied to the set $\om_\varphi$.
\end{proof}

\subsection{Proof}
The proof of Theorem \ref{thm I} follows by an application of the following version of the implicit function theorem for Banach spaces (\cite[Theorem 2.3]{AP1983}).

\begin{thm}[Implicit function theorem]\label{ift}
Let $\Psi\in C^k(\La\times W,\cH)$, $k\ge 1$, where $\cH$ is a Banach space and $\La$ (resp. $W$) is an open set of a Banach space $\cF$ (resp. $\cG$). Suppose that $\Psi(f^*,g^*)=0$ and that the partial derivative $\pa_g\Psi(f^*,g^*)$ is a bounded invertible linear transformation from $\cG$ to $\cH$.
Then there exist neighborhoods $\Theta$ of $f^*$ in $\cF$ and $W^*$ of $g^*$ in $\cG$, and a map $\widetilde g\in\cC^k(\Theta,\cG)$ such that the following hold:
\begin{enumerate}[(1)]
\item $\Psi(f,\widetilde g(f))=0$ for all $f\in\Theta$,
\item If $\Psi(f,g)=0$ for some $(f,g)\in\Theta\times W^*$, then $g=\widetilde g(f)$,
\item $(\widetilde g)'(f)=-[\pa_g \Psi(p) ]^{-1}\circ \pa_f \Psi(p)$, where $p=(f,\widetilde g(f))$ and $f\in\Theta$.
\end{enumerate}
\end{thm}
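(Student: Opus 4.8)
The plan is to recast the equation $\Psi(f,g)=0$ as a parameter-dependent fixed point problem, solve it with the Banach fixed point theorem, and then upgrade the resulting solution map from merely continuous to $C^k$ by a bootstrapping argument. Write $L:=\pa_g\Psi(f^*,g^*)\in\cL(\cG,\cH)$, which is invertible by hypothesis, and define
\begin{equation*}
\cT(f,g):=g-L\inv\Psi(f,g).
\end{equation*}
Since $L\inv$ is a bijection, for $(f,g)$ in the domain we have $\Psi(f,g)=0$ if and only if $g$ is a fixed point of $\cT(f,\cdottone)$. The entire argument then revolves around showing that, for $f$ near $f^*$, this map is a contraction of a small closed ball about $g^*$ into itself.

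First I would establish the contraction property. Differentiating in $g$ gives $\pa_g\cT(f,g)=\id_\cG-L\inv\pa_g\Psi(f,g)$, which vanishes at $(f^*,g^*)$. Because $\Psi\in C^k$ with $k\ge1$, the map $\pa_g\Psi$ is continuous, so there are $r>0$ and a neighborhood of $f^*$ on which $\norm{\pa_g\cT(f,g)}\le\tfrac12$; the mean value inequality then yields $\norm{\cT(f,g_1)-\cT(f,g_2)}\le\tfrac12\norm{g_1-g_2}$ for $g_1,g_2$ in the closed ball $\ol{B}(g^*,r)$. Shrinking the neighborhood of $f^*$ to some $\Theta$ and using $\Psi(f^*,g^*)=0$ with the continuity of $\Psi$, I would control $\norm{\cT(f,g^*)-g^*}=\norm{L\inv\Psi(f,g^*)}\le r/2$, so that $\cT(f,\cdottone)$ maps $\ol{B}(g^*,r)$ into itself. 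For each fixed $f\in\Theta$ the Banach fixed point theorem produces a unique fixed point $\widetilde g(f)\in\ol{B}(g^*,r)$; setting $W^*:=B(g^*,r)$ gives conclusions (1) and (2) at once. Continuity of $\widetilde g$ then follows from the standard estimate $\norm{\widetilde g(f_1)-\widetilde g(f_2)}\le 2\norm{\cT(f_1,\widetilde g(f_2))-\cT(f_2,\widetilde g(f_2))}$ combined with the identity $\cT(f_1,g)-\cT(f_2,g)=-L\inv\pp{\Psi(f_1,g)-\Psi(f_2,g)}$ and the continuity of $\Psi$.

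The main obstacle, requiring the most care, is promoting $\widetilde g$ from continuous to $C^k$ and verifying the derivative formula (3). The key auxiliary fact is that the set $\mathrm{Isom}(\cG,\cH)$ of boundedly invertible operators is open in $\cL(\cG,\cH)$ and that inversion $T\mapsto T\inv$ is $C^\infty$ there. Since $\pa_g\Psi(f^*,g^*)=L$ is invertible and $(f,g)\mapsto\pa_g\Psi(f,g)$ is continuous, after shrinking $\Theta$ and $r$ the operator $\pa_g\Psi(f,\widetilde g(f))$ remains invertible for all $f\in\Theta$. To prove differentiability at $f_0\in\Theta$ with $g_0:=\widetilde g(f_0)$, I would set $L_0:=\pa_g\Psi(f_0,g_0)$, $P:=\pa_f\Psi(f_0,g_0)$, and write $k:=\widetilde g(f_0+h)-g_0$. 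Expanding $0=\Psi(f_0+h,g_0+k)$ to first order gives $L_0 k=-Ph+o(\norm{h}+\norm{k})$; continuity of $\widetilde g$ forces $k\to0$, and feeding this back into the estimate yields $\norm{k}=O(\norm{h})$, hence $k=-L_0\inv Ph+o(\norm{h})$. This is precisely Fr\'echet differentiability of $\widetilde g$ at $f_0$ with derivative $-[\pa_g\Psi(p)]\inv\circ\pa_f\Psi(p)$, i.e.\ formula (3).

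Finally, the $C^k$ regularity is obtained by induction on the order. Formula (3) expresses $(\widetilde g)'$ as a composition of the $C^{k-1}$ maps $\pa_g\Psi$ and $\pa_f\Psi$, evaluated along the map $f\mapsto(f,\widetilde g(f))$, with the $C^\infty$ inversion map. Thus, if $\widetilde g\in C^j$ for some $1\le j<k$, the right-hand side of (3) is $C^j$, so $(\widetilde g)'\in C^j$ and therefore $\widetilde g\in C^{j+1}$; the induction closes at $j=k$, giving $\widetilde g\in C^k(\Theta,\cG)$ as claimed.
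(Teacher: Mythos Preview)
Your proof is correct and follows the standard contraction-mapping route to the implicit function theorem in Banach spaces. However, note that the paper does not give its own proof of this statement: Theorem~\ref{ift} is quoted verbatim from \cite[Theorem~2.3]{AP1983} and is used as a black box (the only addition being the remark that the $k=\infty$ case follows because the neighborhoods and the map $\widetilde g$ constructed in the cited proof are independent of $k$). So there is no in-paper argument to compare against; your write-up is essentially the classical proof one finds in Ambrosetti--Prodi, and it would serve perfectly well as a self-contained justification here.
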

\begin{remark}
Theorem \ref{ift} also holds when $k=\infty$ even though this is not explicitly stated in the statement of \cite[Theorem 2.3]{AP1983}. The proof is simple. Indeed, since the neighborhoods $\Theta$, $W^*$ and the map $\widetilde g:\Theta\to\cG$ in the theorem do not depend on $k$ (see the proof of \cite[Theorem 2.3 and Lemma 2.1]{AP1983} for the details), if $\Psi$ is of class $C^\infty$ then the map $\widetilde g$ belongs to $C^k(\Theta,\cG)$ for all $k\ge1$. In other words, $\widetilde g \in C^\infty(\Theta,\cG)$.
\end{remark}
\begin{proof}[Proof of Theorem \ref{thm I}]
By hypothesis, we have that $\cJ_\Om:\Theta'\times \La'\to\RR^m$ is Fr\'echet differentiable in a neighborhood of $(0,0)\in \Theta\times\La$. By composition, this implies that also the mapping \begin{equation*}
j(\xi,\la):=\cJ_\Om\pp{E(\xi,\la), \la}
\end{equation*}
is Fr\'echet differentiable in a neighborhood of $(0,0)\in X'\times \La'$. Computing the first partial derivative with respect to the first variable yields
\begin{equation*}
\begin{aligned}
\pa_x j (\xi,\la)[\eta]= \pa_\te \cJ_\Om\pp{E(\xi,\la),\la}\qq{\pa_x E(\xi,\la)[\eta]}\\
%= \ell_{\Om_{\xi,\la}}\pp{\pp{\pa_x E(\xi,\la)[\eta]}\circ \restr{\pp{\id+E(\xi,\la)\inv}}{\pa\Om_{\xi,\la}} \cdot n_{\xi,\la}}\\
= \int_{\pa\Om_{\xi,\la}} \pp{ g_{\Om_{\xi,\la}}(\la) \ \pp{\pa_x E(\xi,\la)[\eta]}\circ \restr{\pp{\id+E(\xi,\la)\inv}}{\pa\Om_{\xi,\la}} \cdot n_{\xi,\la}},
\end{aligned}
\end{equation*}
where $n_{\xi,\la}$ denotes the outward unit normal vector to $\pa\Om_{\xi,\la}$.
By a change of variables, the expression above can be rewritten as
\begin{equation}\label{pax j}
\pa_x j(\xi,\la)[\eta]= \int_{\pa\Om} h(\xi,\la)\ m(\xi,\la)\cdot \restr{\pp{\pa_x E(\xi,\la)[\eta]}}{\pa\Om}\ ,
\end{equation}
where
\begin{equation}\label{h&m}
h(\xi,\la):= g_{\Om_{\xi,\la}}(\la) \circ \pp{\id+E(\xi,\la)}, \quad m(\xi,\la):= J_\tau(\xi,\la)\ n_{\xi,\la}\circ\pp{\id+E(\xi,\la)}.
\end{equation}
Here $J_\tau(\xi,\la)$ denotes the tangential Jacobian associated to the map $\id+ E(\xi,\la)$ (see \cite[Definition 5.4.2 and Proposition 5.4.3]{HP2018}).
It is known (see \cite[Proposition 5.4.14 and Lemma 5.4.15]{HP2018}) that both the normal vector and the tangential Jacobian are Fr\'echet differentiable with respect to perturbations of class $C^1$. Moreover, by hypothesis, we know that also the mapping
\begin{equation*}
X'\times \La'\ni (\xi,\la)\mapsto \underbrace{g_{\Om_{\xi,\la}}(\la)\circ\pp{\id+E(\xi,\la)}}_{:=h(\xi,\la)}\in Y
\end{equation*}
is Fr\'echet differentiable.
By composition, both $h(\cdottone,\cdottone)$ and $m(\cdottone,\cdottone)$ are Fr\'echet differentiable in a neighborhood of $(0,0)\in X\times \La$. In particular, this implies that, for fixed $\eta$, also $\pa_x j(\cdottone,\cdottone)[\eta]$ is Fr\'echet differentiable in a neighborhood of $(0,0)\in X\times \La$.
Now, since, by hypothesis, $E(\cdottone,0)$ is a bounded linear operator satisfying \eqref{restr E}, we have
\begin{equation*}
\restr{\pp{\pa_x E(\xi,0)[\eta]}}{\pa\Om} = \restr{E(\eta,0)}{\pa\Om} = \eta n.
\end{equation*}
Thus, evaluating \eqref{pax j} at $(\xi,0)$ yields
\begin{equation*}
\pa_x j(\xi,0)[\eta]= \int_{\pa\Om} h(\xi,0)\ m(\xi,0)\cdot n\eta.
\end{equation*}
Differentiating the expression above with respect to the first variable one more time at the point $(0,0)$ yields
\begin{equation*}
\pa_{xx}^2\ j(0,0)[\xi,\eta]= \int_{\pa\Om} \pp{\pa_x h (0,0)[\xi]\ m(0,0)+h(0,0)\ \pa_x m(0,0)[\xi]}\cdot n \eta = \int_{\pa\Om} \underbrace{\pa_x h(0,0)[\xi]}_{:=Q(\xi)}\ \eta,
\end{equation*}
where we have made use of the following identities:
\begin{equation*}
h(0,0)=g_\Om(0)=0,\quad m(0,0)= n.
\end{equation*}
In other words, the bounded linear operator $Q$ defined in \eqref{Q} is nothing but $\pa_x h(0,0)$.
Now, by the nondegeneracy hypothesis $(ii)$, we can apply Theorem \ref{ift} to the mapping $h:X'\times \La'\to Y$. This yields the existence of neighborhoods $X''$ of $0\in X'$ and $\La''$ of $0\in\La'$ and of a mapping $\widetilde \xi: \La''\to X''$ such that
\begin{equation}\label{almost the conclusion}
\setbld{(\xi,\la)\in X''\times \La''}{ h(\xi,\la)=0} = \setbld{\pp{\widetilde\xi(\la),\la}}{\la\in\La''}.
\end{equation}
Now, recall that, by the first identity in \eqref{h&m} we have:
\begin{equation*}
h(\xi,\la)=0 \quad \ton \pa\Om \iff g_{\Om_{\xi,\la}}(\la) \quad \ton \pa\Om_{\xi,\la}.
\end{equation*}
In other words, \eqref{almost the conclusion} can be rewritten as
\begin{equation*}
\setbld{(\xi,\la)\in X''\times \La''}{ g_{\Om_{\xi,\la}}(\la) \quad \ton \pa\Om_{\xi,\la}} = \setbld{\pp{\widetilde\xi(\la),\la}}{\la\in\La''},
\end{equation*}
which concludes the proof of Theorem \ref{thm I}.
\end{proof}

\section{Application to Problem \ref{pb 2} in the nondegenerate case}
\subsection{Preliminaries: Problem \ref{pb 2} is variational}
Let $(D,\om)$ be a pair of bounded open sets satisfying $\ol D\subset\om$. Let $\pa\om$ be at least of class $C^{2,\al}$ and let $U$ be a bounded open neighborhood of $\om$ with smooth boundary that does not intersect $\ol D$. For fixed $\sg_c\in\RR$ and $f\in C^{1,\al}(\ol U)$ define the following shape functional:
\begin{equation}\label{two phase torsional rigidity}
J(\om, D,f,s):= \int_{\om} \sg |\gr u|^2-\int_\om (c+f)^2, \quad \sg:=(\sg_c+s)\cX_D+\cX_{\rn\setminus D},
\end{equation}
where $u$ is the solution to the boundary value problem \eqref{2ph eq}.
We claim that the pair $(D,\om)$ is a solution to Problem \ref{pb 2} with respect to parameters $(D,f,s)$ if and only if $\om$ is a critical point of the parametrized shape functional \eqref{two phase torsional rigidity} in the sense of \eqref{first derivative}.

Now, for small $\te\in \Theta:=W_0^{1,\infty}(U,\rn)$ and $\la=(\varphi,f,s)\in\La$, let $u_{\te,\la}$ denote the solution to the boundary value problem \eqref{2ph eq}--\eqref{sigma pert} with respect to the pair $(D,\om)$. Moreover, consider the function
\begin{equation}\label{kore}
v_{\te,\la}:= u_{\te,\la}\circ(\id+\varphi+\te).
\end{equation}
It is easy to show (see \cite[Theorem 5.3.2, the subsequent remark and remark 5.3.6]{HP2018}) that the map
\begin{equation*}
(\te,\la)\mapsto v_{\te,\la}\in H_0^1(\om)
\end{equation*}
is Fr\'echet differentiable infinitely many times in a small neighborhood of $(0,0)\in \Theta\times\La$.

In the literature, the derivative of $v_{\te,\la}$ is usually referred to as the \emph{material derivative} of $u_{\te,\la}$, while the function
\begin{equation}\label{u'}
u_\la'[\te_0]:=\restr{\frac{\pa}{\pa\te}}{\te=0} v_{\te,\la}[\te_0]-\gr u_{0,s}\cdot \te_0 \quad \text{for all }\te_0\in W^{1,\infty}(\rn,\rn),
\end{equation}
is referred to as the \emph{shape derivative} of $u_{\te,\la}$.
It is known that $u_\la'[\te_0]\in L^2(\om)$ and that, for all bounded open sets $A\subset \ol A \subset\om\setminus \pa D$, the map $\te\mapsto \restr{u_{\te,\la}}{A}\in H^1(A)$ is Fr\'echet differentiable at $0\in \Theta$ and its derivative coincides with the restriction to $A$ of \eqref{u'}.
% For perturbations $\te_0$ that affect the inclusion $D$, the shape derivative $u_\la'[\te_0]$ presents jumps along the interface $\pa D$ (this is why, in general, one cannot expect $H^1$ regularity in the whole $\om$). Notice that this happens also when the interface $\pa D$ is smooth, so in general, one shall characterize $u_\la'[\te_0]$ as the solution to a transmission problem (see \cite{}).
% On the other hand, when the perturbation $\te_0$ does not affect $D$ (that is, $\restr{\te_0}{V}\equiv 0$),
It is known that the function $u_\la'[\te_0]$ can be characterized as the unique solution to the following boundary value problem:
\begin{equation}\label{u'}
\begin{cases}
-\dv\left( \sg\gr u'\right)=0 \quad \text{in }\om, \\
u'=-\gr u \cdot \te_0 \quad \text{on }\pa\om.\\
\end{cases}
\end{equation}
A result analogous to that of Theorem \ref{struct thm} holds for the function $u_\la'$ as well. Namely, we know that $u_\la'[\te_0]$ depends on $\te_0$ only by means of its normal component $\te_0\cdot n_\om$ on $\pa\om$. For this reason, if $\restr{\te_0}{\pa\om}=\xi n_\om$ for some function $\xi$, then, without ambiguity, we will use the notation $u_\la'[\xi]$ to refer to $u_\la'[\te_0]$ instead.

Let $\cJ_{\om}(\te,\la):=J(D_\varphi,\om_\te,f,s)$. By the above discussion, we infer that the functional
\begin{equation*}
\cJ_{\om}(\te,\la)= (\sg_c+s)\int_{D_\varphi} |\gr u_{\te,\la}|^2 + \int_{\om\setminus \ol{D_\varphi}} |\gr u_{\te,\la}|^2 - \int_{\om_\te} (c+f)^2
\end{equation*}
is Fr\'echet differentiable (actually, infinitely many times) in a neighborhood of $(0,0)\in \Theta\times \La$.

Finally, the well-known Hadamard's formula \cite[Theorem 5.2.2]{HP2018} allows us to obtain the expression for the Fr\'echet partial derivative with respect to the first variable. We get
\begin{equation*}
\pa_\te \cJ_{\om}(0,\la)[\te_0]=
2 \underbrace{\int_\om \sg\gr u_\la \cdot \gr u_\la'[\te_0]}_{=0} +
\int_{\pa\om} |\gr u_\la|^2\ \te_0\cdot n_\om -\int_{\pa\om} (c+f)^2\ \te_0\cdot n_\om
\end{equation*}
Notice that the first integral in the expression above vanishes. This can be shown by taking $u_\la\in H_0^1(\om)$ as a test function in the weak form of \eqref{u'}.
As a result, we can write
\begin{equation*}
\pa_\te \cJ_{\om}(0,\la)[\te_0]=
\int_{\pa\om} \underbrace{\left( |\gr u_\la|^2 - (c+f)^2 \right)}_{:=g_{\om}(\la)}\ \te_0\cdot n_\om.
\end{equation*}
By the arbitrariness of $\te_0$, we conclude that Problem \ref{pb 2} is a variational problem associated with the parametrized shape functional $J$.

\subsection{Defining nondegeneracy for Problem \ref{pb 2}}
Let $(D,\Om)$ be a solution to Problem \ref{pb 2}. For simplicity we will assume that $\pa\Om$ is at least of class $C^{3,\al}$ (we remark that this assumption is not restrictive, since, in light of \cite[Theorem 2]{KN77}, the boundary of every classical solution of Problem \ref{pb 2} must indeed be analytic).

In what follows, we will study the Fr\'echet derivative of the map
\begin{equation}\label{mapsto}
(\te,\la)\mapsto g_{\Om_\te}(\la)\circ\pp{\id+\varphi+\te}= \pp{|\gr u_{\te,\la}|^2-(c+f)^2}\circ \pp{\id+\varphi+\te} \in C^{1,\al}(\pa\Om)
\end{equation}
in the appropriate function spaces.
To this end, we will make use of the following lemma.
Let $\Theta_{\rm reg}$ be defined as in \eqref{some definitions} and let $v_{\te,\la}$ denote the function defined by \eqref{kore}. We have the following result.
\begin{lemma}\label{v is shape smooth}
The map
\begin{equation*}
(\te,\la)\mapsto v_{\te,\la}\in H_0^1(\Om)\cap C^{2,\al}(\ol \Om\cap \ol U)
\end{equation*}
is Fr\'echet differentiable infinitely many times in a neighborhood of $(0,0)\in\Theta_{\rm reg}\times \La$.
\end{lemma}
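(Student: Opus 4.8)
The plan is to obtain the claimed $C^\infty$-dependence from Schauder theory for a single elliptic equation together with the implicit function theorem (Theorem \ref{ift}), after localizing near $\pa\Om$. Two reductions streamline matters. First, the solution $u_{\te,\la}$ of \eqref{2ph eq pert}--\eqref{sigma pert} does not depend on $f$ (which enters only the overdetermined condition), so it suffices to treat the dependence on $(\te,\varphi,s)$. Second, since $\varphi$ is supported in $V$ with $\ol V\subset\Om$ and $\ol U\cap\ol D=\emptyset$, on the collar $\Om\cap U$ one has $\varphi\equiv 0$ and $\sg\equiv 1$; there $v_{\te,\la}=u_{\te,\la}\circ(\id+\te)$ and pulling back the single-phase equation $-\De u_{\te,\la}=1$ gives a divergence-form problem
\begin{equation*}
-\dv\!\big(A(\te)\,\gr v_{\te,\la}\big)=b(\te)\quad\text{in }\Om\cap U,\qquad \restr{v_{\te,\la}}{\pa\Om}=0,
\end{equation*}
with $A(\te)=(\det D\Psi)\,(D\Psi)\inv(D\Psi)^{-T}$ and $b(\te)=\det D\Psi$, where $\Psi=\id+\te$. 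Since $\te\mapsto D\te$ is bounded linear from $\Theta_{\rm reg}$ into $C^{1,\al}$, for $\te$ small $D\Psi=\id+D\te$ is invertible, and because $C^{1,\al}$ is a Banach algebra and matrix inversion and determinant are analytic, the Nemytskii maps $\te\mapsto A(\te)$ and $\te\mapsto b(\te)$ are of class $C^\infty$ into $C^{1,\al}$.

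I would then run the implicit function theorem in Hölder spaces on the smooth annular region $\Om\cap U$, whose boundary splits into $\pa\Om$ and the inner component $\Ga_{\rm in}:=\Om\cap\pa U$. The inner Dirichlet datum $\restr{v_{\te,\la}}{\Ga_{\rm in}}$ is supplied by interior regularity: since $\Ga_{\rm in}$ lies at positive distance from both $\pa\Om$ and $\pa D$, and the coefficients and right-hand side depend smoothly on $(\te,\la)$ while the $H_0^1(\Om)$-valued family is already known to be $C^\infty$, differentiating the equation in the parameters and invoking interior Schauder estimates upgrades the family to a $C^\infty$ map into $C^{2,\al}$ on a neighborhood of $\Ga_{\rm in}$; in particular $(\te,\la)\mapsto\restr{v_{\te,\la}}{\Ga_{\rm in}}\in C^{2,\al}(\Ga_{\rm in})$ is $C^\infty$. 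With this datum fixed, define
\begin{equation*}
G(v,\te,\la):=\Big(-\dv(A(\te)\gr v)-b(\te),\ \restr{v}{\pa\Om},\ \restr{v}{\Ga_{\rm in}}-\restr{v_{\te,\la}}{\Ga_{\rm in}}\Big)
\end{equation*}
as a map from $C^{2,\al}(\ol{\Om\cap U})\times\Theta_{\rm reg}\times\La$ into $C^{0,\al}(\ol{\Om\cap U})\times C^{2,\al}(\pa\Om)\times C^{2,\al}(\Ga_{\rm in})$. The coefficient and composition maps being $C^\infty$ and the bilinear map $(A,v)\mapsto-\dv(A\gr v)$ being bounded, $G$ is of class $C^\infty$. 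At $(\te,\la)=(0,0)$ one has $A(0)=\id$, so $\pa_v G(v_{0,0},0,0)[w]=\big(-\De w,\restr{w}{\pa\Om},\restr{w}{\Ga_{\rm in}}\big)$, which is an isomorphism by Schauder theory for the Dirichlet problem on this smooth region (recall $\pa\Om\in C^{3,\al}$). Theorem \ref{ift} then yields a locally unique $C^\infty$ branch; a uniform Schauder bound keeps $v_{\te,\la}$ inside the uniqueness neighborhood, so the branch coincides with $v_{\te,\la}$, giving smoothness into $C^{2,\al}(\ol\Om\cap\ol U)$.

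The main obstacle is precisely the coupling across the interface $\pa D$: on the collar the solution is not determined by collar data alone, and globally $v_{\te,\la}$ fails to be $C^{2,\al}$ across $\pa D$, which is exactly why the statement is restricted to $\ol\Om\cap\ol U$. Importing the inner datum $\restr{v_{\te,\la}}{\Ga_{\rm in}}$ via interior regularity—legitimate because interior estimates require no boundary condition and hence avoid any circular dependence on the boundary analysis—decouples the Schauder/IFT argument from the transmission problem and is the technical heart of the proof. The remaining verifications, namely the joint $C^\infty$-ness of the Nemytskii maps $\te\mapsto A(\te),b(\te)$ in the Hölder algebra $C^{1,\al}$ and the isomorphism property of the Dirichlet Laplacian on the annular region, are standard.
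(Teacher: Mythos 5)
Your proposal is correct, and it implements the shared IFT--plus--Schauder idea by a genuinely different decomposition than the paper. The paper's own proof is only a sketch: it asserts that the argument is the same as the one giving $C^\infty$-dependence of $(\te,\la)\mapsto v_{\te,\la}\in H_0^1(\Om)$ — i.e.\ the implicit function theorem applied to the pulled-back \emph{two-phase} transmission problem on all of $\Om$ — with ``the right Schauder estimates'' used to upgrade the norm on $\ol\Om\cap\ol U$, deferring the details to the appendix of \cite{SAP}; that route requires inverting the linearized transmission operator in a hybrid space of the form $H_0^1(\Om)\cap C^{2,\al}(\ol\Om\cap\ol U)$. You instead decouple: since $\sg\equiv1$ and $\varphi\equiv0$ on the collar, you treat $\Om\cap U$ as a standalone single-phase Dirichlet problem, import the inner datum on $\Ga_{\rm in}$ from the already-established $H^1_0$-smoothness via interior elliptic estimates, and run the IFT only on the collar, where the linearization is the plain Dirichlet Laplacian (an isomorphism by classical Schauder theory). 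What your route buys is that the two-phase structure and the interface $\pa D_\varphi$ never enter the IFT function spaces; what it costs is that your step 2 — upgrading $C^\infty$-dependence into $H^1_0$ to $C^\infty$-dependence into $C^{2,\al}$ near $\Ga_{\rm in}$ — is itself the nontrivial estimate-bearing lemma (it needs a difference-quotient/Taylor-remainder argument: each parameter-derivative solves an elliptic equation on a fixed neighborhood $W\supset\Ga_{\rm in}$ with $C^{1,\al}$ coefficient data, and interior Schauder applied to the remainder equations gives differentiability into $C^{2,\al}(\ol{W'})$ for $W'\Subset W$), so the analytic work is relocated rather than removed.

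Two minor points. First, your assertion that $\varphi\equiv0$ on $\Om\cap U$ presumes $\ol U\cap\ol V=\emptyset$, which is not literally written in \eqref{some definitions}; it is harmless — the appendix notes $U$ may be taken arbitrarily small — and in fact implicitly necessary for the lemma itself, since $\varphi$ is merely Lipschitz and $v_{\te,\la}=u_{\te,\la}\circ(\id+\varphi+\te)$ could not be $C^{2,\al}$ on the collar if $\varphi$ were active there. Second, your closing step can be simplified: for fixed $(\te,\la)$ the collar equation is \emph{linear} in $v$, so the Dirichlet problem on $\Om\cap U$ has a globally unique solution and the IFT branch coincides with $\restr{v_{\te,\la}}{\ol{\Om\cap U}}$ outright, with no need for a uniform Schauder bound to stay inside the local uniqueness neighborhood.
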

\begin{proof}[Sketch of the proof]
This can be proved in a standard way using Theorem \ref{ift} and the core idea is not different from the proof of the Fr\'echet differentiability of the map $(\te,\la)\mapsto v_{\te,\la}\in H_0^1(\Om)$ of the previous subsection. The only difference lies in using the right Schauder estimates to show that the restriction of $v_{\te,\la}$ to $\ol \Om\cap\ol U$ varies smoothly in the $C^{2,\al}$ norm as well.
We refer to \cite[Appendix]{SAP} for the details.
\end{proof}
By the lemma above, the Fr\'echet differentiability of the map \eqref{mapsto} ensues once we are able to rewrite $\gr u_{\te,\la}\circ(\id+\varphi+\te)$ in terms of $v_{\te,\la}$. By the chain rule, we have
\begin{equation}\label{gr u hikimodoshi}
\gr u_{\te,\la}\circ(\id+\varphi+\te)= (I+D\varphi+D\te)^{-T} \gr v_{\te,\la},
\end{equation}
where $I\in\RR^{N\times N}$ is the identity matrix and the superscript $-T$ denotes the transposed inverse matrix.
Finally, combining \eqref{gr u hikimodoshi} and Lemma \ref{v is shape smooth} yields the Fr\'echet differentiability of the map $\Theta_{\rm reg}\times \La\to X$ defined by \eqref{mapsto}.

In what follows, we compute the partial Fr\'echet derivative of the map \eqref{mapsto} with respect to the first variable at $(0,0)\in \Theta_{\rm reg}\times \La$.
To this end, notice that, by \eqref{gr u hikimodoshi}, we have
\begin{equation*}
g_{\Om_\te}(0)\circ(\id+\te)= |(I+D\te)^{-T}\gr v_\te|^2-c^2 \in X.
\end{equation*}
Differentiating with respect to $\te$ at $\te=0$ yields
\begin{equation*}
\restr{\frac{\pa}{\pa\te}}{\te=0}\bigg( g_{\Om_\te}(0)\circ(\id+\te)\bigg) [\te_0]= 2\gr u \cdot \pp{ (-D\te_0)^T\gr u + \gr v'[\te_0]} = 2\gr u \cdot \pp{\gr u'[\te_0]+ D^2 u\ \te_0 },
\end{equation*}
where, in the second equality, we made use of the identity
\begin{equation*}
\gr u'[\te_0]=\gr v'[\te_0]-(D\te_0)^T \gr u - D^2 u\ \te_0,
\end{equation*}
which in turn is derived by taking the gradient of \eqref{u'}.

We are now ready to compute the partial derivative of the map $h$, which is defined as in \eqref{h&m} for some bounded linear extension operator $E:X\times \La \to \Theta_{\rm reg}$ that satisfies $\restr{E(\xi,0)}{\pa\Om}=\xi n$. The calculations above yield
\begin{equation*}
Q(\xi):=\pa_xh(0,0)[h]= 2\pa_n u \pp{\pa_n u'[\xi]+\pa_{nn}^2 u\ \xi }= -2c \pp{\pa_n u'[\xi]+\pa_{nn}^2 u\ \xi }.
\end{equation*}
In light of the above, \emph{nondegeneracy} for Problem \ref{pb 2} will be defined as follows:
\begin{definition}\label{def non deg}
Let $(D,\Om)$ be a solution to Problem \ref{pb 2}. We say that $(D,\Om)$ is nondegenerate if the map
\begin{equation}\label{Gamma}
\begin{aligned}
\Ga:\quad & C^{2,\al}(\pa\Om)\longrightarrow C^{1,\al}(\pa\Om) \\
& \xi\longmapsto \pa_n u'[\xi]+\pa_{nn}^2 u\ \xi,
\end{aligned}
\end{equation}
satisfies $\ker \Ga=\{0\}$.
\end{definition}

\subsection{Proof of Theorem \ref{thm II}}
In what follows, we will show that condition $\ker\Ga=\{0\}$ is indeed equivalent to the bijectivity of $Q$ and this will conclude the proof of Theorem \ref{thm II}.

% For ease of notation, let $\Ga$ denote the following map
% \begin{equation}\label{Gamma}
% \begin{aligned}
% \Ga:\quad & C^{2,\al}(\pa\Om)\longrightarrow C^{1,\al}(\pa\Om) \\
% & \xi\longmapsto \pa_n u'[\xi]+\pa_{nn}^2 u\ \xi,
% \end{aligned}
% \end{equation}
% that is, $Q(\xi)=-2c\Ga(\xi)$.
Let $\iota:\C^{2,\al}(\pa\Om)\xhookrightarrow{\quad}\C^{1,\al}(\pa\Om)$ denote the inclusion mapping.
We have the following result.
\begin{lemma}
Let $\mu>\norm{\pa_{nn}^2u}_{L^\infty(\pa\Om)}$. Then the map
\begin{equation*}
\Ga+\mu\iota: C^{2,\al}(\pa\Om)\to C^{1,\al}(\pa\Om)
\end{equation*}
is a bijection.
\end{lemma}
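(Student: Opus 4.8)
The plan is to recognize $\Ga+\mu\iota$ as (a constant multiple of) the Dirichlet-to-Neumann map of a Robin problem for the two-phase operator $-\dv(\sg\gr\,\cdot\,)$, and then to invoke standard variational theory. The key preliminary observation is that, since $u=0$ and $|\gr u|=c$ on $\pa\Om$ with $u>0$ inside $\Om$, the outward normal derivative satisfies $\pa_n u\equiv -c$ there; hence on $\pa\Om$ one has $\gr u\cdot\te_0=(\pa_n u)\,\xi=-c\,\xi$ whenever $\restr{\te_0}{\pa\Om}\cdot n=\xi$, so the Dirichlet datum in \eqref{u'} is $u'[\xi]=c\,\xi$. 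Writing $A:=\Ga+\mu\iota$, we thus have $A\xi=\pa_n u'[\xi]+(\pa_{nn}^2u+\mu)\,\xi$, where $u'[\xi]$ is the unique $H^1(\Om)$ solution of $-\dv(\sg\gr u')=0$ in $\Om$ with $u'=c\,\xi$ on $\pa\Om$.

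First I would fix $\psi\in C^{1,\al}(\pa\Om)$ and solve the Robin problem
\begin{equation*}
-\dv(\sg\gr w)=0 \ \tin\Om,\quad \pa_n w+b\,w=\psi \ \ton\pa\Om,\quad b:=\frac{\pa_{nn}^2u+\mu}{c}.
\end{equation*}
By hypothesis $\mu>\norm{\pa_{nn}^2u}_{L^\infty(\pa\Om)}$ and $c>0$, so $b\ge\big(\mu-\norm{\pa_{nn}^2u}_{L^\infty(\pa\Om)}\big)/c>0$ on $\pa\Om$. Since $\sg$ is bounded below by a positive constant, the bilinear form $\mathcal B(w,v)=\int_\Om\sg\,\gr w\cdot\gr v+\int_{\pa\Om}b\,wv$ is coercive on $H^1(\Om)$ (the interior term controls $\norm{\gr w}_{L^2(\Om)}$, while the strictly positive boundary term, together with a Poincar\'e--trace inequality, recovers the full $H^1$ norm), so Lax--Milgram provides a unique weak solution $w\in H^1(\Om)$.

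Next I would upgrade the regularity up to $\pa\Om$. Because $\ol D$ is compactly contained in $\Om$ and $U$ avoids $\ol D$, the coefficient $\sg$ equals $1$ in a neighborhood of $\pa\Om$, so $w$ is harmonic there and satisfies an oblique boundary condition with coefficient $b\in C^{1,\al}(\pa\Om)$ and datum $\psi\in C^{1,\al}(\pa\Om)$ on the $C^{3,\al}$ surface $\pa\Om$; boundary Schauder estimates for the Robin problem then give $w\in C^{2,\al}$ up to $\pa\Om$. Setting $\xi:=\restr{w}{\pa\Om}/c\in C^{2,\al}(\pa\Om)$, uniqueness for the two-phase Dirichlet problem with datum $c\,\xi=\restr{w}{\pa\Om}$ forces $u'[\xi]=w$, whence $\pa_n u'[\xi]=\pa_n w$ and $A\xi=\pa_n w+b\,w=\psi$; this proves surjectivity. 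Injectivity follows from the uniqueness in Lax--Milgram: if $A\xi=0$ then $w:=u'[\xi]$ solves the homogeneous Robin problem and hence vanishes, so $\xi=\restr{w}{\pa\Om}/c=0$. Equivalently, pairing $A\xi$ with $\xi$ in $L^2(\pa\Om)$ yields $\tfrac1c\int_\Om\sg|\gr u'[\xi]|^2+\int_{\pa\Om}(\pa_{nn}^2u+\mu)\,\xi^2=0$, and strict positivity of the second integrand forces $\xi=0$.

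The main obstacle is the reformulation in the first step: once $A$ is identified, up to the factor $c$, with the Dirichlet-to-Neumann operator of a Robin problem whose coefficient $b$ is strictly positive, existence and uniqueness become a routine application of Lax--Milgram. The only genuinely technical ingredient is the boundary Schauder estimate needed to land in the prescribed H\"older spaces $C^{2,\al}(\pa\Om)$ and $C^{1,\al}(\pa\Om)$; this is clean here precisely because the interface $\pa D$ stays away from $\pa\Om$, so the equation is single-phase in a neighborhood of the boundary and classical Schauder theory applies.
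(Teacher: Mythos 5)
Your proposal is correct and follows essentially the same route as the paper: both recast $(\Ga+\mu\iota)\xi=\eta$ as a Robin problem for $-\dv(\sg\gr\,\cdot\,)$, obtain a unique weak solution via Lax--Milgram (coercivity coming from $\mu>\norm{\pa_{nn}^2u}_{L^\infty(\pa\Om)}$ and the positive boundary term), bootstrap to $C^{2,\al}$ near $\pa\Om$ by Schauder estimates since $\sg\equiv 1$ there, and recover $\xi$ from the trace of $w$ so that $u'[\xi]=w$. The only differences are cosmetic: you substitute $\pa_n u\equiv -c$ (valid by the overdetermined condition together with Hopf's lemma) where the paper keeps the factor $-1/\pa_n u$, and you make the injectivity argument explicit, which in the paper is implicit in the Lax--Milgram uniqueness.
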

\begin{proof}
For any given $\eta\in C^{1,\al}(\pa\Om)$, we will show that there exists a unique element $\xi\in C^{2,\al}(\pa\Om)$ that satisfies
\begin{equation*}
(\Ga+\mu\iota)\xi=\eta.
\end{equation*}
First of all, let us consider the Sobolev space $H^1(\Om)$ endowed with the (equivalent) norm $\norm{\varphi}_{H^1(\Om)}:=\norm{\gr\varphi}_{L^2(\Om)}+\norm{\restr{\varphi}{\pa\Om}}_{L^2(\pa\Om)}$ and define the following bilinear form:
\begin{equation*}
\begin{aligned}
\beta: \quad & H^1(\Om)\times H^1(\Om)\longrightarrow\RR\\
& (w,\varphi)\longmapsto \int_\Om \sg \gr w\cdot \gr \varphi + \int_{\pa\Om} -\frac{1}{\pa_n u} (\mu+\pa_{nn}^2 u)\ w\varphi.
\end{aligned}
\end{equation*}
Notice that, by the Hopf lemma, $\pa_n u$ never vanishes on $\pa\Om$ and thus $\be$ is well defined. Moreover, $\be$ is bilinear, continuous and coercive.
Fix now an element $\eta\in C^{1,\al}(\pa\Om)\subset L^2(\pa\Om)$. Then, by the Lax-Milgram theorem, there exists a unique $w\in H^1(\Om)$ such that
\begin{equation*}
\beta(w,\varphi)= \left\langle\eta,\restr{\varphi}{\pa\Om}\right\rangle_{L^2(\pa\Om)}\quad \text{for all }\varphi\in H^1(\Om).
\end{equation*}
Now, if we restrict the identity above to $\varphi$ in $H^1_0(\Om)$, then we realize that $w$ must satisfy
\begin{equation}\label{w eq}
-\dv(\sg\gr w)=0\quad \tin \Om.
\end{equation}
Moreover, integration by parts and the arbitrariness of the trace of $\varphi\in H^1_0(\Om)$ on $\pa\Om$ yield
\begin{equation}\label{w bc}
\pa_n w -\frac{1}{\pa_n u}(\mu+\pa_{nn}^2 u) w = \eta \quad \ton \pa\Om.
\end{equation}
Now, since $w$ is the solution to the boundary problem \eqref{w eq} and \eqref{w bc}, we can inductively bootstrap its regularity in a classical way by means of the standard elliptic regularity estimates and the Schauder boundary estimates (see for example the argument in the proof of \cite[Proposition 5.2]{kamburov sciaraffia} after $(5.7)$). We obtain that $w\in C^{2,\al}(\ol\Om \cap \ol U)$ for all open neighborhoods $U\supset \pa\Om$ whose boundary $\pa U$ is of class $C^{2,\al}$ and whose closure $\ol U$ does not intersect $\ol D$. In particular, the function \begin{equation*}
\xi:= \frac{\restr{w}{\pa\Om}}{-\pa_n u}
\end{equation*}
is a well defined element of $C^{2,\al}(\pa\Om)$. This, together with \eqref{w eq} and \eqref{w bc}, implies that $u'[\xi]=w$.
In particular, again by \eqref{w bc},
\begin{equation*}
(\Ga+\mu\iota) \xi = \pa_n w + \frac{\pa_{nn}^2 u + \mu}{-\pa_n u} \restr{w}{\pa\Om} = \eta.
\end{equation*}
By the arbitrariness of $\eta\in C^{1,\al}(\pa\Om)$, the above shows that $\Ga+\mu\iota: \C^{2,\al}(\pa\Om)\to\C^{1,\al}(\pa\Om)$ is a bijection.
\end{proof}

\begin{proposition}\label{characterization}
The map $\Ga:\C^{2,\al}(\pa\Om)\to\C^{1,\al}(\pa\Om)$ is a continuous bijection if and only if $\ker \Ga =\{0\}$.
\end{proposition}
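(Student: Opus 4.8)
The plan is to prove the two implications separately, with essentially all of the content living in the reverse direction, which I would reduce to the Fredholm alternative by exploiting the bijectivity of $\Ga+\mu\iota$ established in the preceding lemma. The forward implication is immediate: a (continuous) bijection is in particular injective, so $\ker\Ga=\{0\}$. Note also that $\Ga$ is automatically a bounded linear operator, since $Q=-2c\,\Ga$ and $Q$ is bounded by construction (recall $c>0$ by the Hopf lemma); hence continuity of $\Ga$ is never in question and the only real issue is promoting triviality of the kernel to bijectivity.

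For the reverse implication, I would fix $\mu>\norm{\pa_{nn}^2 u}_{L^\infty(\pa\Om)}$ and set $B:=\Ga+\mu\iota$, which by the preceding lemma is a bijection of $\C^{2,\al}(\pa\Om)$ onto $\C^{1,\al}(\pa\Om)$; by the bounded inverse theorem its inverse $B^{-1}:\C^{1,\al}(\pa\Om)\to\C^{2,\al}(\pa\Om)$ is bounded. The key structural observation is that the inclusion $\iota:\C^{2,\al}(\pa\Om)\hookrightarrow\C^{1,\al}(\pa\Om)$ is compact: since $\pa\Om$ is compact, a bounded family in $\C^{2,\al}$ has first derivatives uniformly bounded in $\C^{1,\al}$, so Arzel\`a--Ascoli yields a subsequence whose first derivatives converge in $\C^{0,\al}$. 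Consequently $K:=\mu\,B^{-1}\iota:\C^{2,\al}(\pa\Om)\to\C^{2,\al}(\pa\Om)$ is compact, being the composition of the compact map $\iota$ with the bounded map $B^{-1}$.

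I would then factor $\Ga$ through $B$ by writing
\[
\Ga=B-\mu\iota=B(\id-K).
\]
Since $K$ is compact, $\id-K$ is a Fredholm operator of index zero by Riesz--Schauder theory. Because $B$ is bijective, the factorization gives $\ker\Ga=\ker(\id-K)$, so the hypothesis $\ker\Ga=\{0\}$ forces $\id-K$ to be injective; the Fredholm alternative then upgrades injectivity to bijectivity with bounded inverse. Therefore $\Ga=B(\id-K)$ is a composition of two bounded bijections, hence a continuous bijection, which closes the argument. The only slightly delicate point is the compactness of $\iota$, as it is precisely what renders $\id-K$ a compact perturbation of the identity; once that is secured, the factorization $\Ga=B(\id-K)$ reduces the statement to a completely standard Fredholm-alternative argument.
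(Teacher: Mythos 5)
Your proof is correct, and it rests on the same three ingredients as the paper's — the bijectivity of $\Ga+\mu\iota$ from the preceding lemma, the compactness of the embedding $\iota: C^{2,\al}(\pa\Om)\hookrightarrow C^{1,\al}(\pa\Om)$, and the Fredholm alternative — but you arrange them through the opposite factorization, and this genuinely streamlines the endgame. Writing $B:=\Ga+\mu\iota$, you factor $\Ga=B(\id-K)$ with $K=\mu B\inv\iota$ compact on the \emph{domain} space $C^{2,\al}(\pa\Om)$, whereas the paper works with $K=\iota B\inv$ on the \emph{target} space $C^{1,\al}(\pa\Om)$, i.e.\ effectively with the factorization $\Ga=(\id-\mu K)B$. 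In the paper's arrangement, inverting $\id-\mu K$ produces, for a given $\eta\in C^{1,\al}(\pa\Om)$, a candidate $\xi^\star=(\id-\mu K)\inv K\eta$ that a priori lies only in $C^{1,\al}(\pa\Om)$, so an extra regularity-recovery step (the rearrangement \eqref{tilde xi =}, identifying $\xi^\star$ with an element of $\iota\pp{C^{2,\al}(\pa\Om)}$) is needed before one can conclude $\Ga\xi^\star=\eta$. Your arrangement avoids this entirely: since $B$ is injective, $\ker\Ga=\ker(\id-K)$ directly, and once the Fredholm alternative makes $\id-K$ a bijection of $C^{2,\al}(\pa\Om)$, surjectivity of $\Ga$ onto $C^{1,\al}(\pa\Om)$ follows at once by composing with the bijection $B$ — no bootstrap required. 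Two minor points of care: your Arzel\`a--Ascoli sketch gives uniform convergence of the first derivatives, and the upgrade to $C^{0,\al}$ convergence requires the standard interpolation of the H\"older seminorm between the sup norm and the Lipschitz bound (the paper also asserts compactness of $\iota$ without proof, so this is not a gap, just worth a line); and $c>0$ is part of the statement of Problem \ref{pb 2}, while Hopf's lemma is what guarantees $\pa_n u=-c\ne 0$ on $\pa\Om$, which is the fact actually underlying the identity $Q=-2c\,\Ga$ and hence the automatic continuity of $\Ga$.
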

\begin{proof}
The continuity of $\Ga$ holds by construction. On the other hand, characterizing the invertibility of $\Ga$ is not obvious. We want to show that, under the assumption $\ker\Ga=\{0\}$, for any given $\eta\in C^{1,\al}(\pa\Om)$ there exists a unique element $\xi\in C^{2,\al}(\pa\Om)$ such that $\Ga\xi=\eta$. To this end, take an element $\eta\in C^{1,\al}(\pa\Om)$ and let $\mu>\norm{\pa_{nn}^2 u}_{L^\infty(\pa\Om)}$ so that the map $\Ga+\mu\iota: C^{2,\al}(\pa\Om)\to C^{1,\al}(\pa\Om)$ is a bijection. Set $K:=\iota (\Ga+\mu\iota)\inv: C^{1,\al}(\pa\Om)\to C^{1,\al}(\pa\Om)$. Now, since $\pa\Om$ is compact, the inclusion mapping $\iota: C^{2,\al}(\pa\Om)\xhookrightarrow{\quad} C^{1,\al}(\pa\Om)$ is a compact operator and thus so is $K$. As a result, $\id-\mu K$ is a Fredholm operator of index 0 from $C^{1,\al}(\pa\Om)$ to itself.

In what follows, we will show that the operator $\id-\mu K$ is invertible. By the Fredholm alternative theorem, it will be sufficient to show that $\ker (\id-\mu K)=\{0\}$. To this end, take an element $\zeta \in \ker (\id-\mu K)$. We have
\begin{equation*}
(\id-\mu K)\zeta =0 \implies \iota\zeta=\zeta= \mu K \zeta = \mu \iota (\Ga+\mu \iota)\inv \zeta\implies (\Ga+\mu \iota) \zeta=\mu\iota\zeta \implies \Ga\zeta=0.
\end{equation*}
Now, since, by assumption, $\ker \Ga=\{0\}$, we conclude that $\zeta=0$, that is, also $\ker (\id-\mu K)=\{0\}$. By the Fredholm alternative theorem, this implies that $\id-\mu K: C^{1,\al}(\pa\Om)\to C^{1,\al}(\pa\Om)$ is a bijection.

Let now $\eta$ be a fixed element of $C^{1,\al}(\pa\Om)$ and set
\begin{equation}\label{tilde xi}
{\xi}^\star:= (\id-\mu K)\inv K \eta \in C^{1,\al}(\pa\Om).
\end{equation}
We claim that the function ${\xi}^\star$ actually belongs to $C^{2,\al}(\pa\Om)$. Indeed, we can rearrange the terms in \eqref{tilde xi} to get
\begin{equation}\label{tilde xi =}
{\xi}^\star= K(\eta+\mu {\xi}^\star) = \iota \underbrace{ (\Ga+\mu\iota)\inv (\eta+\mu{\xi}^\star)}_{\in C^{2,\al}(\pa\Om)}.
\end{equation}
In other words, ${\xi}^\star$ lies in the image $\iota\pp{C^{2,\al}(\pa\Om)}$ (which is just the space $C^{2,\al}(\pa\Om)$ seen as a subspace of $C^{1,\al}(\pa\Om)$). This identification between $\iota\pp{C^{2,\al}(\pa\Om)}$ and $C^{2,\al}(\pa\Om)$ allows us to write that, again, by \eqref{tilde xi =},
\begin{equation*}
(\Ga+\mu\iota){\xi}^\star = \eta +\mu {\xi}^\star \implies \Ga{\xi}^\star=\eta.
\end{equation*}
By the arbitrariness of $\eta\in C^{1,\al}(\pa\Om)$ we conclude that $\Ga$ is a bijection, as claimed.
\end{proof}

\section{Remarks on the degenerate case}
The focus of this section will be on the case when the given shape $\Om$ is a \emph{degenerate} critical point for some parametrized shape functional. In particular, we will show why this behavior might occur and how (and to what extent) we can circumvent this problem. Here we give a proof of Theorem \ref{thm III}.
\subsection{Degeneracy due to bifurcation phenomena}
If $\Om$ is a \emph{degenerate} critical point, then Theorem \ref{thm I} cannot be applied. Usually, the reason for this is not technical but is rather the reflection of the local behavior of the family of solutions near $\Om$. As a matter of fact, most of the times, \emph{the} parametrized branch of solutions starting from $\Om$ is not uniquely defined. This is what happens when $\Om$ is a \emph{bifurcation point}, that is, there exist multiple families of solutions bifurcating from $\Om$.

The bifurcation analysis for Problem \ref{pb 2} (when $\sg_c\ne 1$) around concentric balls has been performed in \cite{CYisaac}. Indeed, in light of Theorem \ref{thm II}, we can say that such bifurcation phenomena arise ``whenever possible", that is, whenever the concentric balls $(D_0,\Om_0)$ constitute a degenerate critical point of the torsional rigidity functional in the sense of Definition \ref{def non deg} (see also \cite{cava2018}).

On the other hand, the bifurcation analysis when $\sg_c=1$ is ``trivial". By \cite{Se1971}, we know that $\Om$ is a solution if and only if $\Om$ is a ball (whose radius is uniquely determined by the parameter $c$ of the problem). As a result, every solution is also a bifurcation point, from which infinitely many branches emanate. Finally, by identifying each ball with its center, we can conclude that the set of solutions of Problem \ref{pb 2} when $\sg_c=1$ forms a topological space that is homeomorphic to $\rn$. Under this interpretation, we can say that any curve starting from a point $x_0\in\rn$ uniquely identifies a branch of solutions that bifurcates from the ball of center $x_0$.

Now, roughly speaking, Theorem \ref{thm I} tells us that, any ``perturbed problem" inherits the degeneracy behavior of the ``original problem". Therefore, it is clear that, for Problem \ref{pb 2}, complications arise in a neighborhood of $\sg_c=1$. In the next subsection, we are going to show how this affects the behavior of the parametrized families of solutions in a neighborhood of $\sg_c=1$ and what compromises must be made in order to construct them.

\subsection{Restricting the perturbation space to circumvent the problem}

In this subsection, we will give a proof of Theorem \ref{thm III}. Let $\Om$ be a ball and $D$ be an open set with $\ol D\subset\Om$ and $\sg_c=1$. For simplicity, let us consider the case where the radius of $\Om$ is normalized to $1$.
By elementary computations we know that
the solution of \eqref{2ph eq} can be explicitly written as
\begin{equation*}
u(x)=\frac{1-|x|^2}{2N}, \quad \tfor x\in \ol\Om.
\end{equation*}
In other words, the pair $(D,\Om)$ is a solution to Problem \ref{pb 2} for $c=1/N$ (indeed, for any open set $D\subset \ol D\subset \Om$).

Let the sets $\La$, $X$ and $Y$ be defined following \eqref{notaton 1}--\eqref{notation 2}.
In this special setting, the map $Q$ (defined as in the previous section) can be written as
\begin{equation*}
Q(\xi)=-\frac{2}{N} \left( \pa_n u'[\xi] -\frac{1}{N} \xi\right).
\end{equation*}
Here $u'[\xi]$ is just the solution to \eqref{u'} in the special case where $\sg\equiv 1$, $\om$ is the unit ball and $\te_0=\xi n$. It is well known that this boundary value problem admits an explicit solution by means of a spherical harmonics expansion (see \cite[Proposition 3.2]{cava2018} where a more general case is considered). We have
\begin{equation*}
u'\left[ \sum_{k=0}^\infty \sum_{i=1}^{d_k} \al_{k,i}Y_{k,i} \right](x) = \frac{1}{N}\sum_{k=0}^\infty \sum_{i=1}^{d_k} \al_{k,i}\ |x|^k \ Y_{k,i}\pp{\frac{x}{|x|}} \quad \tfor x\in\Om\setminus\{0\}.
\end{equation*}
Here $Y_{k,i}$ are the so-called spherical harmonics, defined as the solutions to the following eigenvalue problem for the Laplace--Beltrami operator on the unit sphere
\begin{equation*}
-\De_\tau Y_{k,i}= \la_{k} Y_{k,i} \quad \ton \pa\Om
\end{equation*}
and normalized such that $\norm{Y_{k,i}}_{L^2(\pa\Om)}=1$. Furthermore, the eigenspace $\cY_k(\pa\Om)$ corresponding to the $k$\textsuperscript{th} eigenvalue has dimension $d_k$ and is spanned by $\langle Y_{k,1}, \dots, Y_{k,d_k}\rangle$.

By the above, $Q$ admits the following extension as a bounded linear map $H^1(\pa\Om)\to L^2(\pa\Om)$:
\begin{equation*}
Q\pp{ \sum_{k=0}^\infty \sum_{i=1}^{d_k} \al_{k,i}Y_{k,i} } = -\frac{2}{N^2} \sum_{k=0}^\infty \sum_{i=1}^{d_k} (k-1)\ \al_{k,i}Y_{k,i}.
\end{equation*}
As a result, its kernel is $\ker Q = \cY_1(\pa\Om)$, which is an $N$-dimensional space (bearing a one-to-one correspondence with the set of translations in $\rn$).

In other words, $\Om$ is a degenerate critical point and Theorem \ref{thm I} cannot be applied.
To circumvent this problem, instead of $h$, we consider the following modified function
\begin{equation*}
h_{\rm bar}(\xi,\la):= \pi_{\rm bar} \circ g_{\Om_{\xi,\la}}(\la) \circ \pp{\id+(\xi+\eta)n},
\end{equation*}
where $\pi_{\rm bar}:C^{1,\al}(\pa\Om)\to C_{\rm bar}^{1,\al}(\pa\Om)$ is the projection operator onto the space $C_{\rm bar}^{1,\al}(\pa\Om)$ defined in \eqref{notation 2} and $g_{\Om_{\xi,\la}}=|\gr u_{\xi,\la}|^2-(c+f)^2$.

Since $\pi_{\rm bar}$ is a bounded linear operator, we get
\begin{equation*}
Q_{\rm bar}:= \pa_x h_{\rm bar}(0,0) = \pi_{\rm bar} \circ \pa_x h(0,0)= \pi_{\rm bar}\circ Q.
\end{equation*}
By the above, $\ker Q_{\rm bar}=\{0\}$. Now, along the same lines as Proposition \ref{characterization} we get that $Q_{\rm bar}$ is a continuous bijection between $C_{\rm bar}^{2,\al}(\pa\Om)$ and $C_{\rm bar}^{1,\al}(\pa\Om)$. Finally, by reasoning as in the final stage of the proof of Theorem \ref{thm II}, we deduce that there exist neighborhoods $X''$ and $\La''$, and a mapping $\widetilde\xi: \La''\to X''$ such that \eqref{almost the conclusion} holds for $h_{\rm bar}$. This concludes the proof of Theorem \ref{thm III}.
\begin{remark}
The exponent $2$ in \eqref{odc bar} does not have any particular meaning. Indeed, Theorem \ref{thm III} would still be true if both occurrences of the exponent $2$ in \eqref{odc bar} were replaced by any number $p\in(0,\infty)$. More generally, Theorem \ref{thm III} would still be true (and the proof would follow verbatim from the current one) if we replaced the entire \eqref{odc bar} with
\begin{equation*}
\big( F(|\gr u_{\xi,\la}|)- F(c+f)\big)\circ \pp{\id+E(\xi,\la)} \in \cY_1(\pa\Om),
\end{equation*}
where $F:(0,\infty)\to \RR$ is a differentiable function satisfying $F'(c)\ne 0$.
We remark that, in this case, the resulting $\widetilde\xi$ would depend on $F$.
\end{remark}
\begin{remark}
An analogous result to Theorem \ref{thm III} in the one-phase case has recently been proven by Gilsbach and Onodera in \cite{gilsbach onodera}, where they also provide quantitative stability estimates for the solution.
\end{remark}
\section{Appendix: bounded linear extension operators}
In this section, we will show how to construct a bounded linear extension operator like the one that was used in sections 3 and 4. We will use the following result (\cite[Lemma 6.38]{GT}):
\begin{lem}\label{extension lemma}
Let $\Om\subset\rn$ be a bounded open set of class $C^{k,\al}$ ($k\ge 1$) and let $U$ be an open set containing $\ol\Om$. Suppose $\varphi\in C^{k,\al}(\pa\Om)$. Then there exists a bounded linear operator
\begin{equation*}
E_{\pa\Om}: C^{k,\al}(\pa\Om)\to C_0^{k,\al}(U)
\end{equation*}
such that $\restr{E_{\pa\Om}(\varphi)}{\pa\Om}=\varphi$.
\end{lem}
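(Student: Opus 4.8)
The plan is to reduce the global extension to finitely many local, essentially one-dimensional extensions by flattening the boundary, and then to glue the pieces together with a partition of unity. Since $\pa\Om$ is compact and of class $C^{k,\al}$, I would first cover it by finitely many open sets $V_1,\dots,V_M$, each with $\ol{V_j}\subset U$, together with $C^{k,\al}$ diffeomorphisms $\Psi_j:V_j\to B$ (the unit ball) that straighten the boundary, sending $\pa\Om\cap V_j$ onto $B\cap\{y_N=0\}$ and $\Om\cap V_j$ onto $B\cap\{y_N>0\}$. By the very definition of the space $C^{k,\al}(\pa\Om)$ through such charts, the local representation $\varphi_j(y'):=\varphi\pp{\Psi_j^{-1}(y',0)}$ is a $C^{k,\al}$ function on the flat disc $B\cap\{y_N=0\}$, and the assignment $\varphi\mapsto\varphi_j$ is bounded and linear.

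The second step is the local extension. On the flattened chart I would extend $\varphi_j$ to all of $B$ simply by making it constant in the normal direction, namely $\widetilde\varphi_j(y',y_N):=\varphi_j(y')$; this trivially preserves the $C^{k,\al}$ norm and is manifestly linear. Pushing forward, I set $\phi_j:=\widetilde\varphi_j\circ\Psi_j\in C^{k,\al}(V_j)$. Since $\Psi_j$ is a $C^{k,\al}$ diffeomorphism, this composition again lies in $C^{k,\al}$ and depends boundedly and linearly on $\varphi$; moreover, on $\pa\Om\cap V_j$ one has $\Psi_j(x)=(y',0)$, so that $\phi_j=\varphi$ there.

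Finally, I would fix a partition of unity $\{\eta_j\}_{j=1}^M$ subordinate to $\{V_j\}$ with $\eta_j\in C_0^\infty(V_j)$ and $\sum_{j}\eta_j\equiv 1$ on a neighborhood of $\pa\Om$, and define
\begin{equation*}
E_{\pa\Om}(\varphi):=\sum_{j=1}^M\eta_j\,\phi_j.
\end{equation*}
As each $\eta_j$ is smooth with compact support in $V_j\subset U$, this sum belongs to $C_0^{k,\al}(U)$ and is a bounded linear function of $\varphi$. On $\pa\Om$ we have $\restr{E_{\pa\Om}(\varphi)}{\pa\Om}=\sum_j\eta_j\,\varphi=\varphi$, which gives the required extension property.

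The hard part is purely technical and concerns the compositions with the straightening maps $\Psi_j$ and $\Psi_j^{-1}$: one must check that precomposing a $C^{k,\al}$ function with a $C^{k,\al}$ diffeomorphism again yields a $C^{k,\al}$ function, with its Hölder norm controlled by the norms of the function and of the chart. This rests on the Fa\`a di Bruno formula for derivatives up to order $k$ together with the standard product and composition estimates for Hölder seminorms, and it is precisely the assumption that $\pa\Om$ is merely $C^{k,\al}$ that prevents the extension from being any smoother. Since these estimates are exactly the content of \cite[Lemma 6.38]{GT}, I would in the end simply invoke that reference for the detailed bookkeeping.
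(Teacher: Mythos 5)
Your construction is correct and takes essentially the same route as the paper, which proves this lemma simply by invoking \cite[Lemma 6.38]{GT} -- whose underlying proof is exactly your flattening-plus-partition-of-unity scheme -- and whose accompanying remarks point out precisely what your explicit construction makes manifest, namely that linearity and boundedness of $E_{\pa\Om}$ follow from the construction and that connectedness of $\Om$ is never used. One small caveat: your closing appeal to \cite[Lemma 6.38]{GT} ``for the bookkeeping'' is mildly circular, since that lemma is the extension result itself rather than the H\"older composition estimates; but your argument is complete without it once the standard Fa\`a di Bruno and H\"older product/composition bounds are granted.
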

\begin{remark}
Originally, \cite[Lemma 6.38]{GT} gives the result only in the case where $\Om$ is a \emph{domain} of class $C^{k,\al}$. Actually, the assumption of connectedness is not needed if we use the following definition of open set of class $C^{k,\al}$. We say that an open set $\Om\subset \rn$ is of class $C^{k,\al}$ (for all $k\ge0$ and $0<\al<1$) if at each point $x_0\in\pa\Om$ there is a ball $B=B(x_0,r)$ and a one-to-one mapping $\psi$ of $B$ onto $\om\subset\rn$ such that the following hold:
\begin{equation}\label{regularity}
\begin{aligned}
(i)\quad \psi(B\cap \Om)\subset%\setbld{x\in\rn}
\{x_N>0\}, \quad (ii)\quad \psi(B\cap \pa\Om)\subset%\setbld{x\in\rn}
\{x_N=0\},\\
(iii) \quad \psi\in C^{k,\al}(B),\quad (iv)\quad \psi^{-1}\in C^{k,\al}(\om).
\end{aligned}
\end{equation}
\end{remark}
\begin{remark}
Actually, \cite[Lemma 6.38]{GT} does not explicitly state neither boundedness nor linearity for the extension operator. Still, those properties are a straightforward consequence of the way the extension is constructed (see also \cite[Lemma 6.37]{GT}). Furthermore, we remark that the operator norm of $E_{\pa\Om}$ can be estimated by means of the radius $r$ and the norms $\norm{\psi}_{C^{k,\al}}$, $\norm{\psi^{-1}}_{C^{k,\al}}$ in \eqref{regularity} corresponding to those balls $B(x_0,r)\subset U$ that satisfy \eqref{regularity} (in particular, the operator norm of $E_{\pa\Om}$ also depends on the set $U$ by construction).
\end{remark}

Let now $\Om$ be a $C^{3,\al}$ domain of $\rn$ and let $U$ be an open set containing $\ol\Om$ (notice that the set $U$ can be taken to be arbitrarily ``small").
The bounded linear extension operator $E:C^{2,\al}(\pa\Om)\to \Theta_{\rm reg}$ will be then constructed as follows.
Since $\pa\Om$ is of class $C^{3,\al}$, its outward unit normal is a well defined function in $C^{2,\al}(\pa\Om,\rn)$.
Then, for $\xi\in C^{2,\al}(\pa\Om)$, we set
\begin{equation*}
E(\xi):=\begin{pmatrix}
E_{\pa\Om}(\xi n_1)\\
\vdots\\
E_{\pa\Om}(\xi n_N)
\end{pmatrix}.
\end{equation*}
By construction, $E(\cdottone)$ is bounded, linear and satisfies $\restr{E(\xi)}{\pa\Om}=\xi n$ for all $\xi\in C^{2,\al}(\pa\Om)$.

\section*{Acknowledgements}
The author is partially supported by JSPS Grant-in-Aid for Research Activity Startup Grant Number JP20K22298.

\begin{small}

\end{small}

\bigskip

\noindent
\textsc{
Mathematical Institute, Tohoku University, Aoba,
Sendai 980-8578, Japan } \\
\noindent
{\em Electronic mail address:}
cavallina.lorenzo.e6@tohoku.ac.jp


\begin{thebibliography}{9999}

\bibitem[Ac]{acker}
\textsc{A. Acker}, {\em On the qualitative theory of parametrized families of free boundaries}. J. Reine Angew. Math. 393 (1989), 134--167.

%\bibitem%[AF]
%{sobolev spaces}
%\textsc{R.~Adams, J.~Fournier}, Sobolev spaces. Second edition. Pure and Applied Mathematics (Amsterdam), 140. Elsevier/Academic Press, Amsterdam, (2003).


\bibitem[Al]
{Ale1958} \textsc{A.D.~Alexandrov}, {\em Uniqueness theorems for surfaces in the large V}. Vestnik Leningrad Univ., 13 (1958): 5--8 (English translation: Trans. Amer. Math. Soc., 21 (1962), 412--415).

\bibitem[AC]
{altcaff} \textsc{H.W.~Alt, L.A.~Caffarelli}, {\em Existence and regularity for a minimum problem with free boundary}. J. reine angew.
Math., 325 (1981): 105--144.

\bibitem[AP]{AP1983}
\textsc{A. Ambrosetti, G. Prodi}, {A Primer of Nonlinear Analysis}, Cambridge Univ. Press (1983).

% \bibitem[ACM]{ACM}
% \textsc{L. Ambrosio, A. Carlotto, A. Massaccesi}, Lectures on Elliptic Partial Differential Equations, Appunti. Sc. Norm. Super. Pisa (N. S.) 18, Edizioni della Normale, Pisa (2019).

%\bibitem[AS]{athastra96}\textsc{C.~Athanasiadis, I.G.~Stratis}. {\em On some elliptic transmission problems}. Annales Polonici Mathematici 63.2 (1996): 137--154.

\bibitem[Be]{beurling}
\textsc{A. Beurling}, {\em On free-boundary problems for the Laplace equation}. Sem. on
Analytic Funcitons 1, Inst. for Advanced Study Princeton (1957), 248–-263.

\bibitem[BHS]
{BHS2014} \textsc{C.~Bianchini, A.~Henrot, P.~Salani}. {\em An overdetermined problem with non-constant boundary condition}. Interfaces Free Bound. Vol 16 No 2 (2014), 215--241.

%\bibitem[BNST]
%{BNST2008} \textsc{B.~Brandolini, C.~Nitsch, P.~Salani, C.~Trombetti}, {\em On the stability of the Serrin problem}. J. Diff.
%Equations 245, 6 (2008), 1566--1583.

% \bibitem[Br]{Bressan} \textsc{A. Bressan}, {Lecture Notes on Functional Analysis: With Applications to Linear Partial Differential Equations}, American Mathematical Society (2013).

%\bibitem[Ca1]{cava2017} \textsc{
%L. Cavallina}. {\em Locally optimal configurations for the to-phase torsion problem in the ball}. Nonlinear Anal. 162 (2017): 33--48. https://doi.org/10.1016/j.na.2017.06.005.

\bibitem[Ca1]
{cava2018}\textsc{L.~Cavallina}. {\em Stability analysis of the two-phase torsional rigidity near a radial configuration}. Published online in Applicable Analysis (2018). Available at https://www.tandfonline.com/doi/full/10.1080/00036811.2018.1478082

% \bibitem[Ca2]{cava2020} \textsc{L. Cavallina}. {\em Local analysis of a two phase free boundary problem concerning mean curvature}. To appear in Indiana University Mathematics Journal.

%\bibitem[Ca2]
%{cavaphd}\textsc{L.~Cavallina}. {\em Analysis of two-phase shape optimization
%problems by means of shape derivatives }(Doctoral dissertation). Tohoku University, Sendai, Japan, (2018). arXiv:1904.10690

\bibitem[Ca2]{SAP}\textsc{L.~Cavallina}, {\em The simultaneous asymmetric perturbation method for overdetermined free boundary problems}. arXiv:2104.01715

%\bibitem[CMS]
%{camasa} \textsc{L.~Cavallina, R.~Magnanini, S.~Sakaguchi}, {\em Two-phase heat conductors with a surface of the constant flow property}. Journal of Geometric Analysis (2019). https://doi.org/10.1007/s12220-019-00262-8
%arXiv:1801.01352

%\bibitem[CSU]{CSU}
%\textsc{L.~Cavallina, S.~Sakaguchi, S.~Udagawa},
%{\em A characterization of a hyperplane in two-phase heat conductors}. arXiv:1910.06757.

\bibitem[CY1]{CY1}
\textsc{L. Cavallina, T. Yachimura},
{\em On a two-phase Serrin-type problem and its numerical computation}, ESAIM: Control, Optimisation and Calculus of Variations, 26 (2020), 65 pp.
\bibitem[CY2]{CYisaac}
\textsc{L. Cavallina, T. Yachimura},
{\em Symmetry breaking solutions for a two-phase overdetermined problem of Serrin-type}, to appear in the volume Current Trends in Analysis, its Applications and Computation (2021) Research Perspectives, Birkh\"auser. arXiv:2001.10212

%\bibitem[CR]{CR}
%\textsc{M.G. Crandall, P.H. Rabinowitz},
%{\em Bifurcation from simple eigenvalues}. Journal of Functional Analysis, Vol 8, No 2, (1971): 321--340.

\bibitem[DZ]
{SG} \textsc{M.C.~Delfour, Z.P.~Zol\'esio}, {Shapes and Geometries: Analysis, Differential Calculus, and Optimization}. SIAM, Philadelphia (2001).

%\bibitem[DEF]{DEF}
%\textsc{E. DiBenedetto, C.M. Elliott, A. Friedman}, {\em The free boundary of a flow in a porous body heated from its boundary}, Nonlinear Anal., 9 (1986), 879--900.

\bibitem[DvEPs]{DvEPs}
\textsc{M. Dom\'inguez-V\'azquez, A. Enciso, D. Peralta-Salas}, {\em Solutions to the overdetermined boundary problem for semilinear equations with position-dependent nonlinearities}, Adv. Math. 351 (2019), 718--760.

%\bibitem[EM]{EM}
%\textsc{J. Escher, A.V. Matioc},
%{\em Bifurcation analysis for a free boundary problem modeling tumor growth}. Archiv der Mathematik, Vol 97 No 1, (2011): 79--90.

\bibitem[FR]{flucher rumpf}
\textsc{M. Flucher, M. Rumpf}, {\em Bernoulli's free boundary problem, qualitative theory and numerical approximation}, J. reine angew. Math. 489 (1997), 165--204.

% \bibitem[Fo]{distance function}
% \textsc{R.L. Foote}, {\em Regularity of the distance function}, Proc. Am. Math. Soc.,92, (1984): 153–155.

%\bibitem[FR]{FR}
%\textsc{A.~Friedman, F.~Reitich}, {\em Symmetry-breaking bifurcation of analytic solutions to free boundary problems: An application to a model of tumor growth}. Transactions of the American Mathematical Society Vol 353 No 4, (2000): 1587--1634.
%\bibitem[Gi]{Gi}
%\textsc{
%M. Giaquinta}, Multiple Integrals in the Calculus of Variations and Nonlinear Elliptic Systems, Princeton University Press, (1983).

\bibitem[GT]{GT} \textsc{D.~Gilbarg, N.S.~Trudinger}, {Elliptic Partial Differential Equation of Second Order, second edition}. Springer (1983).

\bibitem[GO]{gilsbach onodera} \textsc{A. Gilsbach, M. Onodera}, {\em Linear stability estimates for Serrin's problem via a modified implicit function theorem}. arXiv:2103.07072

\bibitem[GS]{gilsbach stollenwerk} \textsc{A. Gilsbach, K. Stollenwerk}, {\em Existence and stability of solutions for a fourth order overdetermined problem}, J. Math. Anal. Appl. Vol 505 No 2 (2022), Paper No. 125531.

\bibitem[HO]{henrot onodera}
\textsc{A. Henrot, M. Onodera}, {\em Hyperbolic Solutions to Bernoulli’s Free Boundary Problem}. Archive for Rational Mechanics and Analysis 240 (2021), 761--784.

\bibitem[HP]
{HP2018} \textsc{A.~Henrot, M.~Pierre}, {Shape variation and optimization
(a geometrical analysis),} EMS Tracts in Mathematics,
Vol 28, European Mathematical Society (EMS),
Z\"urich (2018).

\bibitem[HS]{HS97} \textsc{A. Henrot, H. Shahgholian}, {\em Convexity of free boundaries with Bernoulli type boundary condition}, Nonlinear Analysis: Theory, Methods \& Applications
Vol 28 No 5 (1997), 815--823.

%\bibitem[KLeS]{KLeeS}
%\textsc{H.~Kang, H.~Lee, S.~Sakaguchi},
%{\em An over-determined boundary value problem arising from neutrally coated inclusions in three dimensions},
%Ann. Sc. Norm. Sup. Pisa, Cl. Sci., Vol 16 No 5, (2016), 1193--1208.

%\bibitem[KLiS]{KLS}
%\textsc{H.~Kang, X.~Li, S.~Sakaguchi},
%{\em Existence of coated inclusions of general shape weakly neutral to multiple fields in two dimensions}, arXiv:1808.01096.

\bibitem[KS]{kamburov sciaraffia}
\textsc{N. Kamburov, L. Sciaraffia},
{\em Nontrivial solutions to Serrin's problem in annular domains}, Annales de l'Institut Henri Poincar\'e C, Analyse non lin\'eaire Vol 38 No 1 (2021), 1--22.

\bibitem[KN]{KN77}\textsc{D.~Kinderlehrer, L.~Nirenberg},
{\em Regularity in free boundary problems},
Annali della Scuola Normale Superiore di Pisa - Classe di Scienze, Série 4, Tome 4 No 2, (1977), 373--391.

%\bibitem[LU]
%{LU}
%\textsc{O.A. Ladyzhenskaya, N.N. Ural’tseva}, Linear and Quasilinear Elliptic Equations. Academic Press, New York, London (1968).

\bibitem[LP]{structure theorem new}\textsc{J. Lamboley, M. Pierre}, {\em Structure of shape derivatives around irregular domains and applications}, Journal of Convex Analysis Vol 14 No 4 (2007), 807--822.

%\bibitem[Ma]{Magna aswr}
%\textsc{R.~Magnanini}, {\em Alexandrov, Serrin, Weinberger, Reilly: symmetry and stability by integral identities}, Bruno Pini Mathematical Seminar (2017), 121--141, preprint (2017) arxiv:1709.073939.

%\bibitem[MP]
%{MagnaniniPoggesi2019}\textsc{R.~Magnanini, G.~Poggesi}, {\em On the stability for Alexandrov's Soap Bubble theorem}, G. JAMA (2019). https://doi.org/10.1007/s11854-019-0058-y.

%\bibitem[MR]{MR91}
%\textsc{S. Montiel, A. Ros}, {\em Compact hypersurfaces: the Alexandrov theorem for higher order mean curvatures}, in Differential geometry, B. Lawson and K. Tenenblat, Eds., vol. 52, 279-–296, Longman, Harlow, UK, (1991).

%\bibitem[Ni]
%{Ni2001} \textsc{L.~Nirenberg}, {Topics in Nonlinear Functional Analysis, Revised reprint of the 1974 original}. Courant Lecture Notes in Mathematics, 6, American Mathematical Society, Providence, RI (2001).

%\bibitem[NT]
%{NT2018} \textsc{C.~Nitsch, C.~Trombetti}, {\em The classical overdetermined Serrin problem}. Complex Variables and Elliptic Equations, 63:7-8 (2018), 1107--1122.

\bibitem[NP]{structure} \textsc{A.~Novruzi, M.~Pierre}, {\em Structure of shape derivatives}, Journal of Evolution Equations 2 (2002): 365--382.

%\bibitem[Oka]{Oka}
%\textsc{H. Okamoto},
%{\em Bifurcation Phenomena in a Free Boundary Problem for a Circulating Flow with Surface Tension}. Math. Methods Appl. Sci., 6 (1984), 215-233.

\bibitem[Pa]{Palais 69} \textsc{R.S. Palais}, {\em The Morse Lemma for Banach Spaces}. Bulletin of the American Mathematical Society 75 (1969), 968--971.

% \bibitem[Sa]{sacksteder}\textsc{R. Sacksteder
% }, {\em On Hypersurfaces with no Negative Sectional Curvatures}, American Journal of Mathematics
% Vol. 82, No. 3 (July 1960), pp. 609-630.

%\bibitem[Sak1]{Sak2016?}
%\textsc{S.~Sakaguchi},
%{\em Two-phase heat conductors with a stationary isothermic surface}, to appear in Rendiconti dell'Istituto di Matematica dell'Universit\`a di Trieste. arXiv:1603.04004.

%\bibitem[Sak2]{Sak bessatsu}
%\textsc{S.~Sakaguchi},
%{\em Two-phase heat conductors with a stationary isothermic surface and their related elliptic overdetermined problems}, to appear in RIMS K\^oky\^uroku Bessatsu. arXiv:1705.10628.

%\bibitem[Sak3]{Sak2019?}
%\textsc{S.~Sakaguchi}, {\em Some characterizations of parallel hyperplanes in multi-layered heat conductors}. arXiv:1905.12380.

%\bibitem[San]{Santilli}
%\textsc{M.~Santilli}, {\em The Heintze-Karcher inequality for sets of finite perimeter and bounded mean curvature}. arXiv:1908.05952.

\bibitem[Se]
{Se1971} \textsc{J.~Serrin}, {\em A symmetry problem in potential theory}. Arch. Rat. Mech. Anal., 43 (1971), 304--318.

\bibitem[Sm]{smale} \textsc{S. Smale}, {\em Morse Theory and a non linear generalization of the Dirichlet problem}, Annals of Mathematics, Vol 80 No 2 (Sep. 1964), 382--396.

%\bibitem%[SZ]
%{SZ1992} \textsc{J.~Sokolowski, J.P.~Zolesio}, {Introduction to Shape Optimization: Shape Sensitivity Analysis}, Springer Series in Computational Mathematics, 10, Springer--Verlag, Berlin, (1992).
%\bibitem[Vo]{Vogel1992}
%\textsc{A. L. Vogel}, {\em Symmetry and regularity for general regions having a solution to certain overdetermined boundary value problems}, Atti Sem. Mat. Fis.
%Univ. Modena, 40 No.2 (1992), 443-–484.

\bibitem[Tr]{tromba} \textsc{A. J. Tromba}, {\em A general approach to Morse Theory}, J. Differential Geometry, 12 (1977), 47--85.

\end{thebibliography}
\end{document}